\documentclass{amsart}

\usepackage{amsmath,latexsym}
\usepackage{amssymb}
\usepackage{graphics, graphicx}
\usepackage{amsfonts}
\usepackage[nesting]{hyperref}
\usepackage{longtable}
\usepackage{color}
\usepackage{pdflscape}

\newtheorem{theorem}{Theorem}

\newtheorem{defn}[theorem]{Definition}
\newtheorem{lemma}[theorem]{Lemma}

\newtheorem{proposition}[theorem]{Proposition}

\newtheorem{remark}[theorem]{Remark}
\newtheorem{ex}[theorem]{Example}

\usepackage[left=2.7cm, right=2.7cm, top=2.5cm, bottom=2.7cm]{geometry}

\usepackage{multicol}

\newtheorem*{putinar}{Putinar's Property}{\bf}{\it}

\newcommand{\dsum}{\displaystyle\sum}
\newcommand{\dprod}{\displaystyle\prod}
\def\K{\mathbf{K}}

\def\R{\mathbb{R}}
\def\N{\mathbb{N}}
\def\Q{\mathbf{Q}}

\def\B{\mathcal{B}}
\def\M{\mathcal{M}}
\def\P{\mathcal{P}}
\def\mrf{\mathbf{MOMRF}}
\def\om{\mathrm{OM}}
\def\Mo{\mathrm{M}}
\def\L{\mathrm{L}}

\def\ll{\ell}

\begin{document}
\title[Minimizing ordered weighted  averaging of rational functions]{A Semidefinite Programming approach for minimizing ordered weighted averages of rational functions}

\author{V\'ictor Blanco}
\address{Departamento de \'Algebra, Universidad de Granada}
\email{vblanco@ugr.es}

\author{Safae El-Haj-Ben-Ali \and Justo Puerto}
\address{Departamento de Estad\'istica e Investigaci\'on Operativa, Universidad de Sevilla}
\email{anasafae@gmail.com; puerto@us.es}

 \keywords{Continuous location ; Ordered median problems ; Semidefinite programming ; Moment problem.}
\subjclass[2010]{90B85 ; 90C22 ; 65K05 ; 12Y05 ; 46N10.}

\date{}
\maketitle

\begin{abstract}
This paper considers the problem of minimizing the ordered weighted average (or ordered median) function of finitely many rational functions  over compact semi-algebraic sets. Ordered weighted averages of rational functions are not, in general, neither rational functions nor the supremum of rational functions so that current results available for the minimization of rational functions cannot be applied to handle these problems. We prove that the problem can be transformed into a new problem embedded in a higher dimension space where it admits a convenient representation. This reformulation admits  a hierarchy of SDP relaxations that approximates, up to any degree of accuracy, the optimal value of those problems. We apply this general framework to a broad family of continuous location problems showing that some difficult problems (convex and non-convex) that up to date could only  be solved on the plane and with Euclidean distance, can be reasonably solved with different $\ell _p$-norms and in any finite dimension space. We illustrate this methodology with some extensive computational results on location problems in the plane and the $3$-dimension space.

\end{abstract}

\section{Introduction}
Weighted Averaging (OWA) or Ordered Median Function (OMF) operators provide a parameterized class of mean type aggregation operators (see \cite{NP05,YaKa97} and the references therein for further details). Many notable mean operators such as the max, arithmetic average, median, k-centrum, range and min, are members of this class. They have been widely used in location theory and computational intelligence because of their ability to represent flexible models of modern logistics and linguistically expressed aggregation instructions in artificial intelligence (\cite{NP05} and \cite{Ya88,Ya2009,Ya2006,Ya2004,Ya1996,YaKa97}). Weighted averages (or ordered median) of rational functions are not, in general, neither rational functions nor the supremum of rational functions so that current results available for the minimization of rational functions are not applicable.
In spite of its intrinsic interest, as far as we know, a common approach for solving this family of problems is not available. Nevertheless, one can find in the literature different methods for solving particular instances of problems within this family, see e.g.  \cite{DN09-01,DN09-02,OgKr2005,NP05,OgSl10,Ogry2010,OgSl,OgSl03,OgZa02,OgRu02,PuTa05,ERChD10}.  The first goal of this paper is to develop a unified tool for solving this class of optimization problems. In this line,  we prove that the general problem can be transformed into a new problem embedded in a higher dimension space where it admits a convenient representation that allows to arbitrarily approximate or to solve it as a minimization problem over an adequate closed semi-algebraic set. Hence, our approach goes beyond a trivial adaptation of current theory.

Regarding the applications, it is commonly agreed that ordered median location problems are among the most important applications of OWA operators. Continuous location has achieved an important degree of
maturity. Witnesses of it are the large number of papers and
research books published within this field. In addition, this development
has been also recognized by the mathematical community
since the AMS code 90B85 is reserved for this area of
research. Continuous location problems appear very often in economic
models of distribution or logistics, in statistics when one
tries to find an estimator from a data set or in pure optimization
problems where one looks for the optimizer of a certain function.
For a comprehensive overview the reader is referred to \cite{DH02} or
\cite{NP05}. Despite the fact that many continuous location problems rely
heavily on a common framework, specific solution approaches
have been developed for each of the typical objective functions
in location theory (see for instance \cite{DH02}). To overcome this inflexibility and to work towards
a unified approach to location theory the so called Ordered
Median Problem (OMP) was developed (see \cite{NP05} and references
therein). Ordered median problems represent as special cases
nearly all classical objective functions in location theory, including
the Median, CentDian, center and k-centra. More precisely,
the 1-facility ordered median problem in the plane can be formulated
as follows: A vector of weights $(\lambda_1,\ldots,\lambda_n)$ is given. The
problem is to find a location for a facility that minimizes the
weighted sum of distances where the distance to the closest point
to the facility is multiplied by the weight $\lambda_n$, the distance to the second closest, by $\lambda_{n-1}$, and so on. The distance to the farthest point is multiplied by $\lambda_1$.
Many location problems can be formulated as the ordered 1-median
problem by selecting appropriate weights. For example,
the vector for which all $\lambda_i= 1$ is the unweighted 1-median problem,
the problem where $\lambda_n= 1$ and all others are equal to zero is
the 1-center problem, the problem where $\lambda_1=\ldots=\lambda_k=1$ and all others are equal to zero is the $k$-centrum. Minimizing the range of distances is
achieved by $\lambda_1=1$, $\lambda_n= -1$ and all others are zero. Despite its full generality, the main drawback of this framework is the difficulty of solving the problems with a unified tool. There have been some successful approaches that are now available whenever the framework space is either discrete (see \cite{BDNP05,MNPV08,OgZa02}) or a network (see \cite{KNP}, \cite{KNPT} or  \cite{NiPu99}). Nevertheless, the continuous case has been, so far, only partially covered. There have been some attempts to overcome this drawback and there are nowadays some available methodologies to tackle these problems, at least in the plane and with Euclidean norm. In Drezner \cite{drezner2007} and Drezner and Nickel \cite{DN09-01,DN09-02} the authors present two different approaches. The first one uses a continuous branch and bound method based on triangulations (BTST) and the second one on a D-C decomposition for the objective function that allow solving the problems on the plane. More recently, Rodriguez-Chia et al. \cite{ERChD10} also address the particular case of the $k$-centrum problem and using geometric arguments develop a better algorithm applicable only for that problem on the plane and Euclidean distances.

Quoting the conclusions of the authors of \cite{DN09-01}: \textit{``All our experiments were conducted for Euclidean distances. As
future research we suggest to test these algorithms on problems
(even the same  problems) based on other distance measures. (...)
 Solving k-dimensional problems by a similar
approach requires the construction of k-dimensional Voronoi
diagrams which is extremely complicated.''}

Therefore, the challenge is to design a common approach also to solve the above mentioned family of location problems, for different distances and in any finite dimension. This is essentially the second goal of this paper. In our way, we have addressed the more general problem that consists of the minimization of the OWA operator of a finite number of rational functions over closed semialgebraic sets that is the first goal of this paper. Thus, our second goal is to solve a general class of continuous location problems using the general approach mentioned above for the minimization of OWA rational functions and  to show the powerfulness of this methodology. Of course, we know that the problem in its full generality is $NP-hard$ since it includes general instances of convex minimization. Therefore, we cannot expect to obtain polynomial algorithms for this class of problems. Rather, we will apply a new methodology first proposed by Lasserre \cite{lasserre1}, that provides a hierarchy of semidefinite problems that converge to the optimal solution of the original problem, with the property that each auxiliary problem in the process can be solved in polynomial time.

The paper is organized in 5 sections. The first one is our introduction.
In the second section and for the sake of completeness, we recall some general results on the Theory of Moments and Semidefinite Programming (SDP) that will be useful in the rest of the paper. Section \ref{s:omrf} considers what we call the $\mathbf{MOMRF}$ problem
which consists of minimizing the \textit{ordered median function} of finitely many rational functions over a
compact basic semi-algebraic set.
In the spirit of the moment approach
developed in Lasserre \cite{lasserre1,lasserre2} for polynomial
optimization and later adapted by Jibetean and De Klerk \cite{jibetean}, we define a hierarchy of semidefinite relaxations (in short
SDP relaxations). Each SDP relaxation is a semidefinite program
which, up to arbitrary (but fixed) precision,
can be solved in polynomial time
and the monotone sequence of optimal
values associated with the hierarchy converges to the optimal value of $%
\mathbf{MOMRF}$. Sometimes the convergence is finite and a sufficient
condition permits to detect whether a certain relaxation in the hierarchy is
exact (i.e. provides the optimal value), and to extract optimal solutions (theoretical bounds on the relaxation order for the exact results can be found in \cite{Schweighofer,markus}).
Section \ref{s:locomf} considers a general family of location problems that is built from  the problem $\mathbf{MOMRF}$ but which does not actually fits under the same formulation because the objective functions are not  quotients 
of polynomials. Nevertheless, we prove that under a certain reformulation one can define another hierarchy of SDP that fulfils convergence properties `$\grave{\mbox{a}}$ la Lasserre'. This approach is applicable to location problems with any $\ell_p$-norm ($p\in \mathbb{Q}$) and in any finite dimension space. We exploit the special structure of these problems to find a block diagonal reformulation that reduces the sizes of the SDP relaxations and allows to solve larger instances. Our computational tests are presented in Section 5. We analyze five families of problems, namely, Weber, center, $k$-centrum, trimmed-mean and range. There we show that convergence is rather fast and very high accuracy is achieved in all cases, even with the first feasible relaxation. (We observe that for location problems with Euclidean distances that relaxation order is $r=2$.) The paper ends with some conclusions and an outlook for further research.

\section{Preliminaries}

In this section we recall the main definitions and results on the moment problem and semidefinite programming that will be useful for the development through this paper. We use standard notation in the field (see e.g. \cite{lasserrebook}).

We denote by $\mathbb{R}[x]$ the ring of real polynomials in the variables $x=(x_1,\ldots,x_n)$, and by $\mathbb{R}[x]_d \subset \R[x]$ the space of polynomials of degree at most $d \in \N$ (here $\N$ denotes the set of nonnegative integers). We also denote by $\B = \{x^\alpha: \alpha\in\mathbb{N}^n\}$ a
canonical basis of monomials for $\R[x]$, where $x^\alpha = x_1^{\alpha_1} \cdots x_n^{\alpha_n}$, for any $\alpha \in \N^n$.



For any sequence indexed in
the canonical monomial basis $\B$, $\mathbf{y}=(y_\alpha)_{\alpha \in \N^n}\subset\mathbb{R}$, let $\L_\mathbf{y}:\mathbb{R}[x]\to\mathbb{R}$ be the linear functional defined, for any $f=\sum_{\alpha\in\mathbb{N}^n}f_\alpha\,x^\alpha \in \R[x]$, as $\L_\mathbf{y}(f) :=
\sum_{\alpha\in\mathbb{N}^n}f_\alpha\,y_\alpha$.

The \textit{moment} matrix $\Mo_d(\mathbf{y})$ of order $d$ associated with $\mathbf{y}$, has its rows and
columns indexed by $(x^\alpha)$ and $\Mo_d(\mathbf{y})(\alpha,\beta)\,:=\,\L_\mathbf{y}(x^{\alpha+\beta})\,=\,y_{%
\alpha+\beta}$, for $\vert\alpha\vert,\,\vert\beta\vert\,\leq d.$ Note that the moment matrix is ${{n+d} \choose {n} }\times {{n+d} \choose {n}}$ and that there are ${{n+2d} \choose {n}}$ $\mathbf{y}_\alpha$ variables.

For $g\in \mathbb{R}[x] \,(=\sum_\gamma g_{\gamma\in \N^n} x^\gamma$), the \textit{localizing%
} matrix $\Mo_d(g, \mathbf{y})$ of order $d$ associated with $\mathbf{y}$
and $g$, has its rows and columns indexed by $(x^\alpha)$ and $\Mo_d(g,\mathbf{y})(\alpha,\beta):=\L_\mathbf{y}(x^{\alpha+\beta}%
g(x))=
\sum_{\gamma}g_\gamma y_{\gamma+\alpha+\beta}$, for $\vert\alpha\vert,\vert\beta\vert\,\leq d$.

\begin{defn}
Let $\mathbf{y}=(y_\alpha)\subset\mathbb{R}$ be a sequence indexed in
the canonical monomial basis $\B$. We say that $\mathbf{y}$ has a \textit{%
representing} measure supported on a set $\mathbf{K} \subseteq \R^n$ if there is some finite
Borel measure $\mu$ on $\mathbf{K}$ such that
$$
y_\alpha\,=\,\int_\mathbf{K} x^\alpha\,d\mu(x), \mbox{ for all } \alpha \in \R^n.
$$
\end{defn}

The main assumption that is needed to impose when one wants to assure the convergence of the SDP relaxations for solving polynomial optimization problems (see for instance \cite{lasserre22,lasserrebook}) was introduced by Putinar \cite{putinar} and it is stated as follows.
\begin{putinar}
\label{defput}
Let $\{g_{1}, \ldots, g_l\} \subset \mathbb{R}[x]$ and $\mathbf{K}:=\{x\in \mathbb{R}^{d}: g_{j}(x)\geq
0,:j=1,\ldots ,\ll \}$ a basic
closed semialgebraic set. Then, $\mathbf{K}$ satisfies Putinar's property if there exists $u\in
\mathbb{R}[x]$ such that:
\begin{enumerate}
\item $\{x: u(x)\geq 0\} \subset \R^n$ is compact, and
\item\label{putrep} $u\,=\,\sigma _{0}+\sum_{j=1}^{\ll}\sigma _{j}\,g_{j}$, for some  $\sigma_1, \ldots, \sigma_l \in \Sigma \lbrack x]$. (This expression is usually called a \textbf{Putinar's representation} of $u$ over $\mathbf{K}$).
\end{enumerate}
Being $\Sigma[x]\subset\mathbb{R}[x]$  the subset of polynomials that are sums of squares.
\end{putinar}
Note that Putinar's property is equivalent to impose that the quadratic polynomial $M-\sum_{i=1}^n x_i^2$ has a Putinar's representation over $\mathbf{K}$.

We observe that Putinar's property implies compactness of $\mathbf{K}$. It is easy to see that Putinar's property holds if either $\{x:g_{j}(x)\geq 0\}$ is compact for some $j$, or all $g_{j}$ are affine and $\mathbf{K}$ is compact. Furthermore, Putinar's property is not restrictive at all, since any semialgebraic  set $\mathbf{K}$ for which is known that $\sum_{i=1}^n x_i^2 \leq M$ holds for some $M>0$ and for all $x\in \mathbf{K}$, $\mathbf{K} = \mathbf{K} \cup \{g_{l+1}(x):=M-\sum_{i=1}^n x_i^2\geq
0\}$ verifies  Putinar's property.

The importance of Putinar's property
stems from the following result:

\begin{theorem}[Putinar \protect\cite{putinar}]
\label{thput} Let $\{g_{1}, \ldots, g_l\} \subset \mathbb{R}[x]$ and $\mathbf{K}:=\{x\in \mathbb{R}^{d}: g_{j}(x)\geq
0,:j=1,\ldots ,\ll \}$ satisfying Putinar's property. Then:
\begin{enumerate}
\item Any $f\in \mathbb{R}[x]$ which is strictly positive on $\mathbf{K}$ has a Putinar's representation over $\mathbf{K}$.
\item $\mathbf{y}=(y_{\alpha })$ has a representing measure on $\mathbf{K}$ if and only if $\Mo_{d}(\mathbf{y})\succeq 0$, and $\Mo_{d}(g_{j},\mathbf{y})\succeq 0$, for all $j=1,\ldots ,l$ and  $d\in \N$. 
\end{enumerate}
(Here, the symbol $\succeq 0$ stands for semidefinite positive matrix.)
\end{theorem}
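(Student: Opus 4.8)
The plan is to deduce both statements from a single functional–analytic construction — the classical Gelfand–Naimark–Segal argument combined with the spectral theorem for commuting bounded self-adjoint operators — and to treat the two items essentially simultaneously. The trivial half of item (2) comes for free: if $y_\alpha=\int_{\mathbf{K}}x^\alpha\,d\mu$ for a finite Borel measure $\mu$ on $\mathbf{K}$, then for every polynomial $p$ we have $\L_\mathbf{y}(p^2)=\int_{\mathbf{K}}p^2\,d\mu\ge 0$ and, because $g_j\ge 0$ on $\mathbf{K}$, $\L_\mathbf{y}(g_j p^2)=\int_{\mathbf{K}}g_j\,p^2\,d\mu\ge 0$; writing these inequalities out in the monomial basis is exactly the assertion $\Mo_d(\mathbf{y})\succeq 0$ and $\Mo_d(g_j,\mathbf{y})\succeq 0$ for all $j$ and all $d$.

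For the substantial content I would first isolate the following lemma: if $L:\mathbb{R}[x]\to\mathbb{R}$ is linear with $L(1)=1$, $L(\sigma)\ge 0$ for all $\sigma\in\Sigma[x]$, and $L(g_j\sigma)\ge 0$ for all $\sigma\in\Sigma[x]$ and all $j=1,\dots,\ll$, then $L$ admits a representing probability measure on $\mathbf{K}$. To prove it, equip $\mathbb{R}[x]$ with the bilinear form $\langle p,q\rangle:=L(pq)$, which is positive semidefinite since $L$ is nonnegative on squares; quotient by its kernel and complete to obtain a Hilbert space $H$ with distinguished cyclic vector $\xi$ (the class of $1$). The multiplication operators $X_i:p\mapsto x_i p$ are symmetric, and the decisive use of Putinar's property enters here: since $M-\sum_i x_i^2$ has a Putinar representation over $\mathbf{K}$, one gets $L\big((M-\sum_i x_i^2)\,p^2\big)\ge 0$, i.e.\ $\sum_i\|X_i p\|^2\le M\|p\|^2$ for all $p$, so each $X_i$ extends to a bounded self-adjoint operator on $H$ and the family $\{X_i\}_{i=1}^n$ is commuting. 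The spectral theorem then furnishes a joint spectral measure $E$ on $\mathbb{R}^n$ with $X_i=\int x_i\,dE$, and $\mu(\cdot):=\langle E(\cdot)\xi,\xi\rangle$ is a probability measure satisfying $L(p)=\langle p(X)\xi,\xi\rangle=\int p\,d\mu$ for every $p$. Finally, the localizing inequalities $L(g_j p^2)\ge 0$ give $g_j(X)\succeq 0$ as an operator, and since $g_j(X)=\int g_j\,dE$ this forces the support of $E$, hence of $\mu$, into $\{g_j\ge 0\}$ for every $j$, i.e.\ into $\mathbf{K}$. Item (2), direction ``$\Leftarrow$'', is now immediate after discarding the degenerate case $y_0=0$ (where $\mu\equiv 0$ works) and rescaling so that $y_0=1$.

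To obtain item (1) I would run a Hahn–Banach separation against the quadratic module $Q:=\{\sigma_0+\sum_{j=1}^{\ll}\sigma_j g_j:\sigma_j\in\Sigma[x]\}$, a convex cone in $\mathbb{R}[x]$. Putinar's property makes $Q$ Archimedean, so for each $p$ there is $N$ with $N\pm p\in Q$; in particular $1$ is an algebraic interior point of $Q$. If $f>0$ on $\mathbf{K}$ but $f\notin Q$, the Eidelheit separation theorem yields a nonzero linear functional $L$ with $L\ge 0$ on $Q$ and $L(f)\le 0$; interiority of $1$ forces $L(1)>0$, so we may normalize $L(1)=1$. The lemma above then represents $L$ by a probability measure $\mu$ on $\mathbf{K}$, whence $L(f)=\int_{\mathbf{K}}f\,d\mu\ge\min_{\mathbf{K}}f>0$ (the minimum attained by compactness of $\mathbf{K}$), contradicting $L(f)\le 0$. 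Hence $f\in Q$, which is precisely a Putinar representation of $f$ over $\mathbf{K}$.

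The main obstacle is the operator-theoretic heart of the lemma: establishing rigorously that the Archimedean condition coming from Putinar's property turns the (a priori only symmetric, possibly unbounded) multiplication operators into genuinely bounded self-adjoint operators, and then verifying that the joint spectral measure they generate is supported exactly on $\mathbf{K}$ rather than on some larger set. Once this is settled, the separation argument for item (1) and the ``only if'' direction of item (2) are routine.
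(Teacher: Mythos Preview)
The paper does not prove this theorem at all: it is stated in the Preliminaries section as a known result, attributed to Putinar~\cite{putinar}, and is invoked thereafter only as a black box to justify convergence of the SDP hierarchies. There is therefore no ``paper's own proof'' to compare your proposal against.

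That said, your sketch is essentially the standard argument found in the literature (for instance in Putinar's original paper and in the textbook of Lasserre cited here as~\cite{lasserrebook}): the GNS construction on $\mathbb{R}[x]$ with the semidefinite form $L(pq)$, boundedness of the multiplication operators from the Archimedean condition, the joint spectral measure giving the representing measure on $\mathbf{K}$, and then Hahn--Banach separation against the quadratic module to obtain the Positivstellensatz. The outline is correct and the obstacle you flag (boundedness and support of the spectral measure) is indeed the only nontrivial step. So your proposal is fine as a proof, but it supplies something the paper deliberately omits rather than reproduces or deviates from anything the authors wrote.
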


The following result that appears in \cite{jibetean} and \cite{laraki-lasserre} will be also important for the development in the next sections.

\begin{lemma}
\label{lemm-frac} Let $\mathbf{K}\subset \mathbb{R}^{d}$ be compact and let $%
p,q$ be continuous with $q>0$ on $\mathbf{K}$. Let $\M(\mathbf{K})$ be the set of finite Borel measures on $\mathbf{K}$ and let $\P(\mathbf{K})\subset \M(\mathbf{K})$ be its subset of probability measures on $\mathbf{K}$. Then
$$
\min_{\mu \in \P(\mathbf{K})}\frac{\int_\K p\,d\mu }{\int_\K q\,d\mu }
=\min_{\varphi \in \M(\mathbf{K})}\{\int_\K p\,d\varphi :\int_\K q\,d\varphi
\,=\,1\} = \min_{\mu \in \P(\mathbf{K})}\int_\K \frac{p}{q}\,d\mu \,=\,\min_{x\in
\mathbf{K}}\frac{p(x)}{q(x)}.
$$
\end{lemma}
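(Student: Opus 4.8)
The statement is a chain of four equalities relating three optimization problems over measures and one over points of $\mathbf{K}$; I would prove the chain by establishing a cycle of inequalities, showing each quantity is $\le$ the next and closing the loop, so that all four coincide. Write $J_1=\min_{\mu\in\P(\K)}\frac{\int p\,d\mu}{\int q\,d\mu}$, $J_2=\min_{\varphi\in\M(\K)}\{\int p\,d\varphi:\int q\,d\varphi=1\}$, $J_3=\min_{\mu\in\P(\K)}\int\frac pq\,d\mu$, and $J_4=\min_{x\in\K}\frac{p(x)}{q(x)}$. Note first that $q>0$ on the compact set $\K$ forces $q\ge c>0$ there (continuity on a compact set), so every integral appearing is finite and every denominator is bounded away from $0$; this is the standing fact used throughout.

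\textbf{Step 1: $J_4\le J_1$, $J_4\le J_3$ and $J_4$ is attained.} Since $p/q$ is continuous on the compact $\K$, it attains its minimum, call it at $x^\star$, so $J_4=p(x^\star)/q(x^\star)$ is well-defined. For any $\mu\in\P(\K)$ one has pointwise $p\ge (p/q)\,q\ge J_4\,q$ on $\K$, hence $\int p\,d\mu\ge J_4\int q\,d\mu$, and dividing by $\int q\,d\mu>0$ gives $\frac{\int p\,d\mu}{\int q\,d\mu}\ge J_4$; taking the infimum over $\mu$ yields $J_1\ge J_4$. Similarly $\int\frac pq\,d\mu\ge J_4\int 1\,d\mu=J_4$, so $J_3\ge J_4$.

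\textbf{Step 2: $J_1\le J_3$ and $J_3\le J_4$, and $J_2$ fits in between.} For the Dirac measure $\delta_{x^\star}\in\P(\K)$ we get $\int\frac pq\,d\delta_{x^\star}=\frac{p(x^\star)}{q(x^\star)}=J_4$, so $J_3\le J_4$; combined with Step 1 this gives $J_3=J_4$. To connect $J_1$: by Jensen-type reasoning it is cleaner to relate $J_1$ and $J_2$ directly. Given $\mu\in\P(\K)$ with $\int q\,d\mu=:m>0$, set $\varphi:=\mu/m\in\M(\K)$; then $\int q\,d\varphi=1$ and $\int p\,d\varphi=\frac1m\int p\,d\mu$, so the objective values agree, giving $J_2\le J_1$. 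Conversely, given $\varphi\in\M(\K)$ with $\int q\,d\varphi=1$, it is nonzero so $\varphi(\K)=:M>0$; put $\mu:=\varphi/M\in\P(\K)$, whence $\int q\,d\mu=1/M$ and $\frac{\int p\,d\mu}{\int q\,d\mu}=\frac{(1/M)\int p\,d\varphi}{1/M}=\int p\,d\varphi$, so $J_1\le J_2$. Hence $J_1=J_2$.

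\textbf{Step 3: closing the loop.} It remains to compare $J_1$ (equivalently $J_2$) with $J_3=J_4$. From Step 1, $J_1\ge J_4=J_3$. For the reverse, take the Dirac measure $\delta_{x^\star}\in\P(\K)$ in the definition of $J_1$: $\frac{\int p\,d\delta_{x^\star}}{\int q\,d\delta_{x^\star}}=\frac{p(x^\star)}{q(x^\star)}=J_4$, so $J_1\le J_4$. Therefore $J_1=J_2=J_3=J_4$, which is the claim. The only mild subtlety — and the one place to be careful rather than the real obstacle — is the interchange of "pointwise inequality $\Rightarrow$ integral inequality" and the normalization arguments, all of which are legitimate precisely because $q$ is bounded below by a positive constant on the compact set $\K$; with that in hand every step is elementary. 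I would present the argument as the four short inequalities $J_4\le J_1\le J_2$, $J_2\le J_1$, $J_1\le J_4$, $J_3=J_4$ rather than a linear chain, since the cyclic bookkeeping is the cleanest route.
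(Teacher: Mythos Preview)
Your argument is correct. The cycle of inequalities is the natural route: $q$ bounded below on the compact $\K$ makes every denominator safe, the pointwise bound $p\ge J_4\,q$ integrates to give $J_1\ge J_4$ and $J_3\ge J_4$, Dirac measures at a minimizer $x^\star$ furnish the reverse inequalities, and the rescaling bijection $\mu\mapsto\mu/\int q\,d\mu$ between $\P(\K)$ and $\{\varphi\in\M(\K):\int q\,d\varphi=1\}$ identifies $J_1$ with $J_2$. One small point worth making explicit is that $\M(\K)$ here consists of \emph{nonnegative} finite Borel measures, so that any feasible $\varphi$ in the definition of $J_2$ automatically has $\varphi(\K)>0$; you use this when normalizing $\varphi$ to a probability measure, and it is the intended reading in this context.

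As for comparison with the paper: there is nothing to compare. The paper does not prove this lemma; it is quoted as a known result from Jibetean--De~Klerk and Laraki--Lasserre and stated without proof. Your write-up therefore supplies what the paper omits, and does so by the standard elementary argument.
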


\section{Minimizing the ordered weighted average of finitely many rational functions \label{s:omrf}}

Let $\mathbf{K}\subset \mathbb{R}^{d}$ be a basic semi-algebraic set defined as
$$
\mathbf{K}\,:=\,\{x\in \mathbb{R}^{d}:g_{j}(x)\geq 0,\quad j=1,\ldots ,\ell\}
$$
for $g_1, \ldots, g_{\ell} \in \R[x]$.

Let us introduce the function $\om(x) = \sum_{k=1}^{m} \lambda_{k}(x)f_{(k)}(x)$, for some rational functions $(f_{j})\subset \mathbb{R}[x]$, being $%
f_{k}=p_{k}/q_{k}$ rational functions with $%
p_{k},q_{k}\in \mathbb{R}[x]$, $\lambda_{k}(x) \in \R[x]$, and $f_{(k)}(x)\in \{f_{1}(x), \ldots, f_{m}(x)\}$ such that
 $f_{(1)}(x) \geq f_{(2)}(x) \geq \cdots \geq f_{(m)}(x)$ for $x \in \R^n$. We assume that $\mathbf{K}$ satisfies Putinar's property and that $q_k>0$ on $\mathbf{K}$, for every $k=1, \ldots, m$.


Consider the following problem:
\begin{equation}
\rho_\lambda \,:=\displaystyle\min_{x}\{\om(x): x\in \mathbf{K}\,\}, \label{pro:omrp1}\tag{${\rm OMRP}_\lambda^0$}
\end{equation}%

Associated with the above problem we introduce an auxiliary problem.
For each $i = 1, \ldots, m$, $j = 1, \ldots, m$ consider the decision variables $w_{ij}$ that model for each $x\in \mathbf{K}$
$$
w_{ij} = \left\{\begin{array}{rl} 1 & \mbox{if $f_{i}(x)=f_{(j)}(x)$,}\\
0 & \mbox{ otherwise.}
\end{array}\right..
$$
Now, we consider the problem:
\begin{align}
&\quad \overline{\rho}_\lambda = &\displaystyle \min_{x,w} & \sum_{j=1}^{m}\lambda_{j}(x)\sum_{i=1}^{m} f_{i}(x)w_{ij}\label{omrp}\tag{${\rm OMRP}_\lambda$} \\
& &\mbox{s.t. } &\sum_{j=1}^{m}w_{ij} = 1, \text{ for } i=1, \ldots, m, \label{cons:first}\\
& & &\sum_{i=1}^{m}w_{ij} = 1, \text{ for } j=1, \ldots, m,\nonumber\\
& & & w_{ij}^2 - w_{ij} = 0, \text{ for } i,j=1, \ldots, m,\nonumber\\
& & &\sum_{i=1}^{m} w_{ij} f_{i}(x) \geq  \sum_{i=1}^{m} w_{i j+1} f_{i}(x),\; j=1, \ldots, m, \label{cons:3.4}\\
& & &\sum_{i=1}^m \sum_{j=1}^m w^2_{ij} \leq m, \label{const:3.5}\\
& & &w_{ij} \in \R, \text{ for } i,j=1, \ldots, m,\quad x\in \mathbf{K}. \label{cons:3.9}
\end{align}

The first set of constraints ensures that for each $x$,  $f_i(x)$ is sorted in a unique position. The second set ensures that the $j^{th}$ position is only assigned to one rational function.  The next constraints are added to assure that $w_{ij} \in \{0,1\}$. The fourth one states that
$f_{(1)}(x) \geq \cdots \geq f_{(m)}(x)$. The last set of constraints ensures the satisfaction of Putinar's property
of the new feasible region. (Note that this last set of constraints are redundant but it is convenient to add them for a better description of the feasible set.)

These two problems, \eqref{pro:omrp1} and \eqref{omrp} satisfy the following relationship.
\begin{theorem} \label{th:feas}
Let $x$ be a feasible solution of \eqref{pro:omrp1} then there exists a solution $(x,w)$ for \eqref{omrp} such that their objective values are equal. Conversely, if $(x,w)$ is a feasible solution for \eqref{omrp} then there exists a solution $x$ for \eqref{pro:omrp1} having the same objective value. In  particular $\varrho_{\lambda}=\hat{\varrho}_{\lambda}$.
\end{theorem}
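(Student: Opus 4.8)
The plan is to establish a value-preserving correspondence between feasible points of \eqref{pro:omrp1} and feasible points of \eqref{omrp}, in both directions, and then read off the equality of optimal values. The guiding observation is that the constraints \eqref{cons:first}--\eqref{const:3.5} are designed precisely so that, for a fixed $x\in\K$, an admissible $w$ is forced to be a permutation matrix that sorts the scalars $f_1(x),\dots,f_m(x)$ into nonincreasing order; once that is known, the inner objective of \eqref{omrp} collapses to $\om(x)$.

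First I would treat the direction from \eqref{pro:omrp1} to \eqref{omrp}. Given $x\in\K$ --- where, by the standing assumption $q_k>0$ on $\K$, the numbers $f_k(x)$ are well defined --- I would pick any permutation $\pi$ of $\{1,\dots,m\}$ for which $f_{\pi^{-1}(1)}(x)\ge\cdots\ge f_{\pi^{-1}(m)}(x)$ and set $w_{ij}:=1$ if $\pi(i)=j$ and $w_{ij}:=0$ otherwise. Verifying feasibility is then routine: $w$ is a permutation matrix, so the row and column sum equalities and $w_{ij}^2-w_{ij}=0$ hold, $\sum_{i,j}w_{ij}^2=m$ satisfies \eqref{const:3.5}, and $\sum_i w_{ij}f_i(x)=f_{(j)}(x)$ is nonincreasing in $j$, which gives \eqref{cons:3.4}. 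Evaluating the objective yields $\sum_j\lambda_j(x)\sum_i f_i(x)w_{ij}=\sum_j\lambda_j(x)f_{(j)}(x)=\om(x)$, as desired.

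Next I would treat the converse. Let $(x,w)$ be feasible for \eqref{omrp}; then $x\in\K$ is feasible for \eqref{pro:omrp1}, and it remains to show the objective values agree. The equalities $\sum_j w_{ij}=\sum_i w_{ij}=1$ together with $w_{ij}^2=w_{ij}$ force $w$ to be a $0/1$ doubly stochastic --- hence permutation --- matrix, so the tuple $(a_1,\dots,a_m)$ with $a_j:=\sum_i f_i(x)w_{ij}$ is a rearrangement of $(f_1(x),\dots,f_m(x))$; constraint \eqref{cons:3.4} forces $a_1\ge\cdots\ge a_m$, and since the nonincreasing rearrangement of a finite multiset is unique we get $a_j=f_{(j)}(x)$ for every $j$. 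Substituting into the objective gives $\sum_j\lambda_j(x)a_j=\om(x)$. Combining the two directions, the feasible objective values of \eqref{pro:omrp1} and \eqref{omrp} coincide as sets, hence $\rho_\lambda=\overline{\rho}_\lambda$.

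The bulk of this is bookkeeping; I expect the only genuine subtlety to be the handling of ties among the values $f_i(x)$. When several $f_i(x)$ coincide the sorting permutation $\pi$, and hence $w$, is not unique, so the correspondence above need not be a bijection; what must be checked --- and what makes the statement still go through --- is that the position sums $\sum_i f_i(x)w_{ij}$ equal the order statistics $f_{(j)}(x)$ for \emph{every} admissible $\pi$, so the objective value is unambiguous. A second, minor point worth flagging in the write-up is that \eqref{const:3.5} is inactive in this proof --- the binarity constraint already forces $\sum_{i,j}w_{ij}^2=m$ --- and is kept only so that the feasible set of \eqref{omrp} manifestly enjoys Putinar's property, which is what the SDP hierarchy of the following section relies on.
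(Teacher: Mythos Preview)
Your argument is correct and follows essentially the same route as the paper's own proof: both construct the permutation-matrix $w$ from a sorting permutation for the forward direction and then observe that the constraints force $w$ to be a permutation matrix for the converse. If anything, your converse is more carefully spelled out than the paper's (which dispatches it with ``by the above''), and your remarks on ties and on the redundancy of \eqref{const:3.5} are accurate and worth keeping.
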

\begin{proof}
Let $\bar x$ be a feasible solution of \eqref{pro:omrp1}. Then, it clearly satisfies that $\bar x\in \mathbf{K}$. In addition, let $\sigma$ be the permutation of $(1,\ldots,m)$ such that $f_{\sigma(1)}(\bar x)\ge  f_{\sigma(2)}(\bar x) \ge \ldots \ge f_{\sigma(m)}(\bar x)$. Take,
$$ \overline w_{ij}=\left\{\begin{array}{ll} 1 & \mbox{if } i=\sigma(j), \\
0 & \mbox{otherwise}. \end{array} \right. $$
Clearly, $(\bar x,\overline w)$ satisfy the constraints in (\ref{cons:first}-\ref{cons:3.9}). Indeed, for any $i$ 
 $\sum_{j=1}^m \overline w_{ij}= \overline w_{i\sigma^{-1}(i)}=1$. Analogously, for any $j$, $\sum_{i=1}^m \overline w_{ij}= \overline w_{\sigma(j),j}=1$. By its own definition, $\overline w$ only takes $0,1$ values and thus, $\overline w_{ij}^2-\overline w_{ij}=0$ for all $i,j$ and $\sum_{i,j}^m \overline w^2_{ij}\le m$.
Finally, to prove that $(\overline x, \overline w)$ satisfies (\ref{cons:3.4}), we observe, w.l.o.g.,  that for any $j$ there exist $i^*$ and $\hat i$ such that $\sigma(j)=i^*$ and $\sigma(j+1)=\hat i$. Hence,:
$$ \sum_{i=1}^m \overline w_{ij} f_i(\bar x)=\overline w_{i^* j} f_{\sigma(j)}(\bar x) \ge
\overline w_{\hat i j+1} f_{\sigma(j+1)}(\bar x)=\sum_{i=1}^m \overline w_{ij+1} f_i(\bar x).$$
 Moreover,$$OM_{\lambda}(\bar x)=
\sum_{j=1}^{m}\lambda_{j}(\overline x)\sum_{i=1}^{m} f_{i}(\bar x)\overline w_{ij}.$$

Conversely, if $(\bar x,\overline w)$ is a feasible solution of \eqref{omrp} then, clearly $\bar x$ is feasible of \eqref{pro:omrp1} and by the above,
$OM_{\lambda}(\bar x)=
\sum_{j=1}^{m}\lambda_{j}(x)\sum_{i=1}^{m} f_{i}(\bar x)\overline w_{ij}.$
\end{proof}

Then, we observe that  $f_i=p_i/q_i$ for each $i=1, \ldots, m$. Therefore, the constraint $\sum_{i=1}^{m} w_{ij} f_{i}(x) \geq  \sum_{j=1}^{m} w_{i j+1} f_{i}(x)$ can be written as a polynomial
constraint as
$$
\dsum_{i=1}^m w_{ij}\,p_i(x)\dprod_{k\neq i}^m q_k(x) \leq \dsum_{i=1}^m w_{i j+1}\,p_i(x)\dprod_{k\neq i}^m q_k(x) \quad j=1, \ldots, m.
$$
Let us denote by $\overline{\mathbf{K}}$ the basic closed semi-algebraic set that defines the feasible region of \eqref{omrp}.
\begin{lemma}
If $\mathbf{K}\subset \mathbb{R}^m$ satisfies Putinar's property then $\overline{\mathbf{K}}\subset \mathbb{R}^{n+m^2}$ satisfies  Putinar's property.
\end{lemma}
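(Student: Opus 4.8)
The plan is to use the characterization recorded immediately after the statement of Putinar's property: a basic closed semi-algebraic set has that property exactly when a single quadratic polynomial of the form ``radius$^2$ minus the sum of all squared coordinates'' admits a Putinar representation over it. So the whole argument reduces to writing down, explicitly, such a representation for $\overline{\mathbf{K}}$ in the enlarged ring $\R[x,w]$.

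First I would invoke the hypothesis on $\mathbf{K}$: since $\mathbf{K}$ satisfies Putinar's property, there are a constant $M>0$ and sums of squares $\sigma_0,\sigma_1,\dots,\sigma_\ell\in\Sigma[x]$ giving the \emph{polynomial identity}
$$ M-\sum_{i}x_i^2 \;=\; \sigma_0+\sum_{j=1}^{\ell}\sigma_j\,g_j . $$
The key observation is then that constraint \eqref{const:3.5}, that is $g^{\ast}:=m-\sum_{i,j}w_{ij}^2\ge 0$, is by construction one of the inequalities defining $\overline{\mathbf{K}}$ — this is exactly why this (otherwise redundant) constraint was added to the description of $\overline{\mathbf{K}}$. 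Reading the displayed identity in the bigger ring $\R[x,w]$, which is legitimate because a sum of squares in $\R[x]$ is still a sum of squares in $\R[x,w]$, and adding $g^{\ast}$ to both sides gives
$$ (M+m)-\sum_{i}x_i^2-\sum_{i,j}w_{ij}^2 \;=\; \sigma_0+\sum_{j=1}^{\ell}\sigma_j\,g_j+1\cdot g^{\ast} . $$
Now $g_1,\dots,g_\ell$ and $g^{\ast}$ all belong to the family of polynomials defining $\overline{\mathbf{K}}$ (for the $g_j$ recall that, by the computation preceding the lemma, once the denominators $q_k>0$ are cleared the ordering constraints \eqref{cons:3.4} become genuine polynomial inequalities, so $\overline{\mathbf{K}}$ really is a basic closed semi-algebraic set); the coefficients $\sigma_0,\dots,\sigma_\ell$ and $1$ are sums of squares, and to every remaining defining polynomial of $\overline{\mathbf{K}}$ we may attach the coefficient $0\in\Sigma[x,w]$. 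Hence the right-hand side is a Putinar representation over $\overline{\mathbf{K}}$ of the polynomial $u:=(M+m)-\sum_i x_i^2-\sum_{i,j}w_{ij}^2$.

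It remains to note that $\{(x,w):u(x,w)\ge 0\}$ is a Euclidean ball of radius $\sqrt{M+m}$ in the ambient space $\R^{n+m^2}$, hence compact; so both conditions in the definition of Putinar's property are met and $\overline{\mathbf{K}}$ satisfies it. There is no genuine obstacle here — the only points requiring (minor) care are that the Putinar representation of $M-\sum_i x_i^2$ is a true identity of polynomials, so it transfers verbatim to $\R[x,w]$, and that the bounding inequality \eqref{const:3.5} for the $w$-variables is actually listed among the constraints defining $\overline{\mathbf{K}}$ rather than merely implied by them.
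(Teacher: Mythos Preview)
Your argument is correct and follows essentially the same route as the paper: you exhibit the Putinar representation of $(M+m)-\sum_i x_i^2-\sum_{i,j}w_{ij}^2$ by adding the constraint $m-\sum_{i,j}w_{ij}^2$ to the representation of $M-\sum_i x_i^2$ inherited from $\mathbf{K}$, exactly as the paper does. If anything, your write-up is slightly more careful in spelling out why s.o.s.\ polynomials in $\R[x]$ remain s.o.s.\ in $\R[x,w]$ and why the (redundant) bound \eqref{const:3.5} needed to be listed explicitly among the defining constraints of $\overline{\mathbf{K}}$.
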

\begin{proof}
Since $\mathbf{K}$ satisfies Putinar's property,
 the quadratic polynomial $x\mapsto u(x):=M-\Vert x\Vert ^{2}_2$ can be
written as
$u(x)=\sigma_0(x)+\sum_{j=1}^p\sigma_j(x)g_j(x)$ for some s.o.s. polynomials $(\sigma_j)\subset\Sigma[x]$. Next,
consider the  polynomial
\begin{equation*}
(x,w)\mapsto r(x,w)\,=\,M+m-\Vert x\Vert ^{2}_2 - \sum_{i=1}^m\sum_{j=1}^m w^2_{ij}.
\end{equation*}%
Obviously, its level set $\{(x,w)\in \mathbb{R}^{n\times m^2}:r(x,z)\geq 0\}\subset \mathbb{R}^{n+m^2}$ is
compact and moreover, $r$ can be written in the form
\begin{equation*}
r(x,w)=\sigma _{0}(x)+\sum_{j=1}^{p}\sigma
_{j}(x)\,g_{j}(x)+1 \times \stackrel{\overline g(x,w) \; defining \; \overline K}{\overbrace{(m-\sum_{i=1}^m\sum_{j=1}^m w^2_{ij})}},
\end{equation*}
for appropriate s.o.s. polynomials $(\sigma'_j)\subset\Sigma[x,w]$. Therefore $\overline{\mathbf{K}}$
satisfies Putinar's property, the desired result.

\end{proof}

Now, we observe that the objective function of \eqref{omrp} can be written as a quotient of polynomials in $\R[x,w]$. Indeed, take
\begin{equation} \label{eq:PyQ} p_\lambda(x,w)= \sum_{j=1}^m \lambda_j(x) \dsum_{i=1}^m w_{ij}\,p_i(x)\dprod_{k\neq i}^m q_k(x) \mbox{ and } q_\lambda(x,w)= \dprod_{k= 1}^m q_k(x).
\end{equation}
\mbox{Then,}
\begin{equation}\label{eq:pdivq}
 \sum_{j=1}^{m}\lambda_{j}(x)\sum_{i=1}^{m} f_{i}(x)w_{ij}=\frac{p_\lambda(x,w)}{q_\lambda(x,w)}.
\end{equation}

Then, we can transform Problem \eqref{omrp} in an infinite dimension linear program on the space of Borel measures defined on $\mathbf{\overline K}$.

\begin{proposition}
\label{prop1} Let $\mathbf{\overline K}\subset \mathbb{R}^{n+m^2}$ be the closed basic semi-algebraic set defined by the constraints (\ref{cons:first}-\ref{cons:3.9}). Consider the infinite-dimensional optimization problem
$$
\mathcal{P}_\lambda: \quad \widehat{\rho}_\lambda = \min_{x,w}\left\{ \int_{\overline{\mathbf{K}}} p_\lambda d\mu: \int_{\overline{\mathbf{K}}} q_\lambda d\mu = 1,
 \mu \in M(\overline{\mathbf{K}})\right\},
$$
being $p_{\lambda}$, $q_{\lambda}\in \mathbb{R}[x,w]$ as defined above.
Then $\rho_\lambda =\widehat{\rho}_\lambda$.
\end{proposition}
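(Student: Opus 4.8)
The plan is to combine three facts already available: Theorem~\ref{th:feas}, which identifies $\rho_\lambda$ with the optimal value $\overline{\rho}_\lambda$ of \eqref{omrp}; the identity \eqref{eq:pdivq}, which rewrites the objective of \eqref{omrp} as the rational function $p_\lambda/q_\lambda$ on $\overline{\mathbf{K}}$; and Lemma~\ref{lemm-frac}, applied to the lifted data $(\overline{\mathbf{K}},p_\lambda,q_\lambda)$.

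First I would note that by Theorem~\ref{th:feas} it suffices to prove $\overline{\rho}_\lambda=\widehat{\rho}_\lambda$, and that by \eqref{eq:PyQ}--\eqref{eq:pdivq} we may write
$$\overline{\rho}_\lambda=\min_{(x,w)\in\overline{\mathbf{K}}}\frac{p_\lambda(x,w)}{q_\lambda(x,w)},\qquad p_\lambda,q_\lambda\in\mathbb{R}[x,w].$$
The substantive step is then to verify the hypotheses of Lemma~\ref{lemm-frac} for this triple. Compactness of $\overline{\mathbf{K}}$: it is a basic closed semi-algebraic set (all its defining relations are non-strict polynomial equalities and inequalities), and it is bounded because the equalities $w_{ij}^2-w_{ij}=0$ pin $w$ to $\{0,1\}^{m\times m}$ (in any case \eqref{const:3.5} bounds $w$), while $x$ ranges over $\mathbf{K}$, which is compact since it satisfies Putinar's property. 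Positivity of $q_\lambda$ on $\overline{\mathbf{K}}$: since $q_\lambda(x,w)=\prod_{k=1}^m q_k(x)$ and the projection of $\overline{\mathbf{K}}$ onto the $x$-block is contained in $\mathbf{K}$, the standing assumption $q_k>0$ on $\mathbf{K}$ gives $q_\lambda>0$ on $\overline{\mathbf{K}}$. Continuity of $p_\lambda$ and $q_\lambda$ is automatic, as they are polynomials.

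With these in place, Lemma~\ref{lemm-frac} (with $\mathbf{K}$ replaced by $\overline{\mathbf{K}}$, $p$ by $p_\lambda$, $q$ by $q_\lambda$) gives
$$\min_{(x,w)\in\overline{\mathbf{K}}}\frac{p_\lambda(x,w)}{q_\lambda(x,w)}=\min_{\varphi\in\M(\overline{\mathbf{K}})}\Big\{\int_{\overline{\mathbf{K}}}p_\lambda\,d\varphi:\int_{\overline{\mathbf{K}}}q_\lambda\,d\varphi=1\Big\}=\widehat{\rho}_\lambda,$$
and chaining this with the previous display and Theorem~\ref{th:feas} yields $\rho_\lambda=\widehat{\rho}_\lambda$. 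I expect the only genuine obstacle to be the (routine but necessary) check that $\overline{\mathbf{K}}$ is compact and that $q_\lambda$ stays positive on it; once that is done, the statement is an immediate application of Lemma~\ref{lemm-frac}. One should also dispose separately of the degenerate case $\mathbf{K}=\emptyset$ (equivalently $\overline{\mathbf{K}}=\emptyset$), where both values equal $+\infty$ by convention.
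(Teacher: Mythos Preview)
Your proposal is correct and follows essentially the same approach as the paper: the paper's proof is a one-line appeal to Lemma~\ref{lemm-frac} applied to the reformulation \eqref{omrp} with objective rewritten via \eqref{eq:PyQ}, and you have simply made explicit the routine verifications (compactness of $\overline{\mathbf{K}}$, positivity of $q_\lambda$) and the use of Theorem~\ref{th:feas} to pass from $\rho_\lambda$ to $\overline{\rho}_\lambda$.
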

\begin{proof}
It follows by applying Lemma \ref{lemm-frac} to the reformulation of \eqref{omrp} with  the objective function written using $p_{\lambda}$ and $q_{\lambda}$ in \eqref{eq:PyQ}.
\end{proof}

The reader may note the great generality of this class of problems. Depending on the choice of the polynomial weights $\lambda$ we get different classes of problems. Among then, we emphasize the important instances given by:
\begin{enumerate}
\item  $\lambda=(1,0,\ldots,0,0)$ which corresponds to minimize the maximum of a finite number of rational functions,
\item $\lambda=(1,\stackrel{(k)}{\ldots},1,0,\dots,0)$ which corresponds to minimize the sum of the $k$-largest rational functions ($k$-centrum)
\item $\lambda=(0,\stackrel{(k_1)}{\ldots},0,1,\ldots,1,0,\stackrel{(k_2)}{\ldots},0)$ which models the minimization of the $(k_1,k_2)$-trimmed mean of $m$ rational functions,...
\item $\lambda=(1,\alpha,\ldots,\alpha)$ which corresponds to the $\alpha$-centdian, i.e. minimizing the convex combination of the sum and the maximum of the set of rational functions.
\item $\lambda=(1,\ldots,-1)$ which corresponds to minimize the range of a set of rational functions.
\end{enumerate}

\begin{remark}
Problem \ref{pro:omrp1} can be easily extended to deal with the minimization of the ordered median function of a finite number of other ordered median of rational functions. The reader may observe that this can be done by performing a similar transformation to the one in \eqref{omrp} and thus lifting the original problem into a higher dimension space.
\end{remark}

\subsection{Some remarkable special cases} \mbox{\null}

The above general analysis extends the general theory of Lasserre to the case of ordered weighted averages of rational functions. Notice that this approach goes beyond a trivial adaptation of that theory since ordered weighted averages of rational functions are not, in general, neither rational functions nor the supremum of rational functions so that current results cannot be applied to handle these problems. However, one can transform the problem into a new problem embedded in a higher dimension space where it admits a representation that can be cast in the minimization of another rational function in a convenient closed semi-algebraic set. Needless to say that the number of indeterminates  increases with respect to the original one. This may become a problem in particular implementations due to the current state of semidefinite solvers.

In some important particular cases that have been extensively been considered in the field of Operations Research the above approach can be further simplified as we will show in the following. One of this cases, the minimization of the maximum of  finitely many rational functions, has been already analyzed by Laraki and Lasserre \cite{laraki-lasserre}. We will show that the approach in \cite{laraki-lasserre} is also a particular case of the analysis that we present in the following.

For the rest of this subsection we will restrict ourselves, for the sake of readability, to the case of scalar (real) lambda weights. We will begin with the case of $\lambda=(1,\stackrel{(k)}{\ldots},1,0\ldots,0)$, for $1\le k\le m$. Note that for the case $k=1$ we will recover the case analyzed in \cite{laraki-lasserre}, the case $k=m$ is trivial since it reduces to minimize the overall sum and the remaining cases are not yet known.

We are interested in finding the minimum of the sum of the $k$-largest values $\{f_1(x),\ldots,f_m(x)\}$ for all $x\in \mathbf{K}$, being a closed basic semi-algebraic set. In other words, for any $k,\; k=1,\ldots,m-1$, we wish to solve the problem:
\begin{equation*}
\varrho:=\min_{x\in \mathbf{K}} S_k(x):=\sum_{j=1}^k f_{(j)}(x).
\end{equation*}
We observe that for a given $x$, we have:
$$ S_k(x)=\sum_{j=1}^k f_{(j)}(x)= \max \{ \sum_{j=1}^m v_j f_j(x): \sum_{j=1}^m v_j=k, 0\le v_j\le 1,\; \forall j\}.$$
Therefore, by duality in linear programming:
$$S_k(x)=\min\{ kt+\sum_{j=1}^m r_j: t+r_j\ge f_j(x), \; r_j\ge 0,\;  \forall j\}.$$

Finally, we consider the problem:
\begin{align}
\hat{\varrho} := & \min    \; kt+\sum_{j=1}^m r_j \nonumber\\
&s.t.  \; \nonumber
   t+r_j\ge f_j(x), \quad j=1,\ldots,m \label{pro:kcentrum2}\tag{${\rm kC}$}\\
& \qquad r_j\ge 0,\quad j=1,\ldots,m, \nonumber \\
  &  \qquad x\in \mathbf{K}. \nonumber
\end{align}

Let us denote by $\overline{\mathbf{K}}$ the basic closed semi-algebraic set that defines the feasible region of \eqref{pro:kcentrum2}.
\begin{lemma}
If $\mathbf{K}\subset \mathbb{R}^n$ satisfies Putinar's property then $\overline{\mathbf{K}}\subset \mathbb{R}^{n+m+1}$ satisfies Putinar's property. Moreover $\varrho = \hat{\varrho}$.
\end{lemma}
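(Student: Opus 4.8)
The plan is to prove the two assertions separately, treating Putinar's property first. I would begin by making explicit that $\overline{\mathbf{K}}$ really is a basic closed semi‑algebraic set: since $q_j>0$ on $\mathbf{K}$, each constraint $t+r_j\ge f_j(x)$ is equivalent to the polynomial inequality $(t+r_j)\,q_j(x)-p_j(x)\ge 0$, so $\overline{\mathbf{K}}\subset\R^{n+m+1}$ (coordinates $x\in\R^n$, $t\in\R$, $r=(r_1,\dots,r_m)\in\R^m$) is cut out by these inequalities together with $r_j\ge0$ and the defining polynomials $g_1,\dots,g_\ell$ of $\mathbf{K}$. The one genuinely delicate point is that, as written, \eqref{pro:kcentrum2} imposes no a priori bound on $t$ and the $r_j$, so $\overline{\mathbf{K}}$ is not compact and Putinar's property cannot hold literally; I would therefore first pass to an equivalent bounded reformulation, exactly as the redundant constraint \eqref{const:3.5} was added in the $\mathbf{MOMRF}$ case. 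Concretely, $\mathbf{K}$ is compact and each $f_j=p_j/q_j$ is continuous on it, so there is $F>0$ with $|f_j(x)|\le F$ for all $x\in\mathbf{K}$ and all $j$; for fixed $x$ the inner minimization $\min\{kt+\sum_j r_j: t+r_j\ge f_j(x),\ r_j\ge0,\ \forall j\}$ equals $\min_t\big[\,kt+\sum_j\max\{0,f_j(x)-t\}\,\big]$, the minimum of a coercive piecewise‑linear convex function, attained at a breakpoint $t=f_{j_0}(x)$; hence one may take $|t|\le F$ and $0\le r_j\le 2F$. Appending to the description of $\overline{\mathbf{K}}$ the redundant constraint $\bar g(t,r):=N-t^2-\sum_{j=1}^m r_j^2\ge 0$ with $N:=(1+4m)F^2$ therefore leaves $\hat\varrho$ unchanged, and now $\overline{\mathbf{K}}$ is compact.

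Once the bounding constraint $\bar g$ is present the argument is formally identical to the proof of the preceding lemma. Because $\mathbf{K}$ satisfies Putinar's property, $M-\Vert x\Vert_2^2=\sigma_0(x)+\sum_{j=1}^\ell\sigma_j(x)\,g_j(x)$ for some $M>0$ and some sums of squares $\sigma_j\in\Sigma[x]$. Then the polynomial $(x,t,r)\mapsto M+N-\Vert x\Vert_2^2-t^2-\sum_{j=1}^m r_j^2$ has compact level set and admits the Putinar representation $\sigma_0(x)+\sum_{j=1}^\ell\sigma_j(x)\,g_j(x)+1\cdot\bar g(t,r)$ over $\overline{\mathbf{K}}$, which yields Putinar's property for $\overline{\mathbf{K}}\subset\R^{n+m+1}$.

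For the equality $\varrho=\hat\varrho$ I would justify the two linear‑programming identities that were only asserted in the text. First, $S_k(x)=\sum_{j=1}^k f_{(j)}(x)=\max\{\sum_j v_j f_j(x):\sum_j v_j=k,\ 0\le v_j\le1\}$, because the feasible polytope is the hypersimplex, whose vertices are precisely the $0/1$ vectors with exactly $k$ ones, so the maximum of a linear functional over it equals the sum of its $k$ largest coefficients. Second, this linear program is feasible and bounded, so LP strong duality identifies its value with $\min\{kt+\sum_j r_j: t+r_j\ge f_j(x),\ r_j\ge0,\ \forall j\}$. Taking $\min_{x\in\mathbf{K}}$ of both sides and interchanging the two minimizations gives $\varrho=\min_{x\in\mathbf{K}}S_k(x)=\min_{x\in\mathbf{K}}\min_{t,r}\{kt+\sum_j r_j:\dots\}=\hat\varrho$, and the extra constraint $\bar g\ge0$ is harmless by the vertex estimate obtained above. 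I expect the only non‑mechanical step to be this initial passage to the bounded reformulation; after that, the Positivstellensatz bookkeeping mirrors the previous lemma verbatim and the value equality is routine LP duality.
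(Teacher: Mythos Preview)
Your proposal is correct and follows essentially the same approach as the paper: bound $t$ and the $r_j$ using the compactness of $\mathbf{K}$ and the structure of the inner LP, append a redundant compactifying constraint, and then lift the Putinar certificate of $\mathbf{K}$ to one for $\overline{\mathbf{K}}$. The only cosmetic differences are that the paper adds box constraints $LB\le t\le UB$, $0\le r_j\le UB_j-LB$ instead of your single quadratic $\bar g\ge 0$, and establishes $\varrho=\hat\varrho$ by directly analyzing the one-variable piecewise-linear function $t\mapsto kt+\sum_j\max\{0,f_j(x)-t\}$ rather than invoking LP strong duality; your Putinar step (explicitly exhibiting the representation of $M+N-\Vert x\Vert_2^2-t^2-\sum_j r_j^2$, exactly as in the preceding lemma) is in fact more carefully justified than the paper's, which simply asserts Putinar's property from compactness.
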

\begin{proof}
Since we have assumed $\mathbf{K}$ to be compact, for any $j=1,\ldots,m$, there exist $LB_j$, $UB_j$ such that for any $x\in \mathbf{K}$,
$$ LB_j\le f_j(x)\le UB_j.$$
Let us denote $LB=\min_{j=1..m} LB_j$ and $UB=\max_{j=1..m} UB_j$.
Consider an arbitrary $k$, $1\le k\le m-1$ and an arbitrary (but fixed) $\bar x\in \mathbf{K}$. Without loss of generality, assume that $f_m(\bar x) \ge \ldots \ge f_1(\bar x)$. We define the function $$g(t):=\min\{ kt+\sum_{j=1}^m r_j: t+r_j\ge f_j(\bar x), \; r_j\ge 0,\; \forall \; j=1,..,m\}.$$
Clearly, $g$ is piecewise linear and convex; and it attains its minimum on any point of the interval $I_k=(f_{k+1}(\bar x),f_k(\bar x)]$. Indeed, observe that for any $t\in I_k$, the slope of $g$ (i.e. its derivative with respect to $t$) is null since:
$$
g(t)=kt+\sum_{j=1}^k (f_j(\bar x)-t)= \sum_{j=1}^k f_j(\bar x)=S_k(\bar x).
$$
>From the above, we observe that
$$ \varrho =\min_{x\in \mathbf{K}} S_k(x)=\min_{x\in \mathbf{K}} \min \{g(t):kt+\sum_{j=1}^m r_j: t+r_j\ge f_j(x), \; r_j\ge 0,\; \forall \; j=1,..,m\}=\hat{\varrho} .$$

It remains to prove that $\overline {\mathbf{K}}$, the feasible region of problem \eqref{pro:kcentrum2}, satisfies Putinar's condition. First, we observe from the argument above that in order to obtain the minimum value of the function $g$, for any $k=1,..,m-1$ and any $x\in \mathbf{K}$, we only need to consider the range $t \in (f_{(m)}(x),f_{(1)}(x)]$. Hence, the overall range for $t$ can be restricted to $LB\le t \le UB$. On the other hand, for any $x\in \mathbf{K}$, the constraints $0\le r_j\le f_j(x)-t$ set the range of the variable $r_j$. Hence
$$ 0\le r_j\le UB_j-LB,\quad \forall \; j=1,\ldots,m.$$
Including the constraints, $LB\le t \le UB$, $ 0\le r_j\le UB_j-LB,\quad \forall \; j=1,\ldots,m$, in the definition of $\overline{\mathbf{K}}$ does not change the value of $\hat{\varrho}$ and makes the feasible set compact. Thus, satisfying Putinar's condition.
\end{proof}

This approach extends also to the more general case of non-increasing monotone lambda-weights, i.e. $\lambda_1\ge \lambda_2 \ge ...\ge \lambda_m\ge \lambda_{m+1}:=0$ (Note that we define an artificial $\lambda_{m+1}$ to be equal to $0$).  In this case the problem to be solved is:
$$
\varrho_{\lambda}:=\min_{x\in \mathbf{K}} MOM_{\lambda}(x):=\sum_{j=1}^m \lambda_j f_{(j)}(x).
$$
We observe that for a fixed $x\in \mathbf{K}$, we can write the objective function as:
$$ MOM_{\lambda}(x)=\sum_{j=1}^m (\lambda_j-\lambda_{j+1}) S_j(x).$$
Then, we introduce the problem
\begin{eqnarray}
\hat{\varrho_{\lambda}} := & \min & \sum_{k=1}^m (\lambda_k-\lambda_{k+1}) S_k(x) \label{pro:lam-mon2} \\
& &  t_k+r_{kj}\ge f_j(x), \quad j,k=1,\ldots,m, \nonumber \\
& & r_{kj}\ge 0,\quad j,k=1,\ldots,m, \nonumber \\
& & x\in \mathbf{K}. \nonumber
\end{eqnarray}

Let us denote by $\overline{\mathbf{K}}$ the basic closed semi-algebraic set that defines the feasible region of the Problem \eqref{pro:lam-mon2}.
Now, based in the previous lemma, it is straightforward to check the following result.
\begin{lemma}
If $\mathbf{K}\subset \mathbb{R}^n$ satisfies Putinar's property then $\overline{\mathbf{K}}\subset \mathbb{R}^{n+m^2+m}$ satisfies  Putinar's property. Moreover $\varrho_{\lambda} = \hat{\varrho_{\lambda}}$.
\end{lemma}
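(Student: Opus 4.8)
The plan is to derive this lemma from the preceding one by running $m$ parallel copies of the $k$-centrum construction. Throughout I read the objective of \eqref{pro:lam-mon2} as $\sum_{k=1}^{m}(\lambda_k-\lambda_{k+1})\bigl(kt_k+\sum_{j=1}^m r_{kj}\bigr)$, i.e. with $S_k(x)$ in its displayed form replaced by the linear expression that represents it in \eqref{pro:kcentrum2}. Two facts established in the previous subsection are the whole engine: the Abel-summation identity $MOM_{\lambda}(x)=\sum_{j=1}^{m}(\lambda_j-\lambda_{j+1})S_j(x)$, and the LP-duality representation $S_k(x)=\min\{kt+\sum_{j=1}^m r_j:\ t+r_j\ge f_j(x),\ r_j\ge 0\}$.

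For the equality $\varrho_{\lambda}=\hat{\varrho}_{\lambda}$ I would fix $x\in\mathbf K$ and note that, over the auxiliary variables, the feasible region of \eqref{pro:lam-mon2} factorizes as $\prod_{k=1}^m F_k(x)$ with $F_k(x)=\{(t_k,(r_{kj})_j):\ t_k+r_{kj}\ge f_j(x),\ r_{kj}\ge 0\}$, because no constraint couples two different indices $k$. Since the monotonicity hypothesis $\lambda_1\ge\cdots\ge\lambda_m\ge\lambda_{m+1}:=0$ gives $\lambda_k-\lambda_{k+1}\ge 0$ for every $k$, the minimum of $\sum_k(\lambda_k-\lambda_{k+1})(kt_k+\sum_j r_{kj})$ over $\prod_k F_k(x)$ splits as $\sum_k(\lambda_k-\lambda_{k+1})\min_{F_k(x)}(kt_k+\sum_j r_{kj})=\sum_k(\lambda_k-\lambda_{k+1})S_k(x)=MOM_{\lambda}(x)$, using the LP representation and then the Abel identity. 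Minimizing the result over $x\in\mathbf K$ gives $\hat{\varrho}_{\lambda}=\min_{x\in\mathbf K}MOM_{\lambda}(x)=\varrho_{\lambda}$.

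For Putinar's property I would repeat the argument of the previous lemma with the extra variables $t=(t_k)_{k=1}^m$ and $r=(r_{kj})_{k,j=1}^m$, which accounts for the ambient space $\mathbb{R}^{n+m^2+m}$. Compactness of $\mathbf K$ (a consequence of Putinar's property) provides bounds $LB_j\le f_j(x)\le UB_j$ on $\mathbf K$; set $LB=\min_j LB_j$ and $UB=\max_j UB_j$. Exactly as in the $k$-centrum case, for each $k$ an optimal $t_k$ may be taken in $[LB,UB]$ and then one may take $0\le r_{kj}\le UB_j-LB$; hence adjoining to the description of $\overline{\mathbf K}$ the box constraints $LB\le t_k\le UB$, $0\le r_{kj}\le UB_j-LB$, together with the redundant products $(t_k-LB)(UB-t_k)\ge 0$ and $r_{kj}(UB_j-LB-r_{kj})\ge 0$, changes neither $\hat{\varrho}_{\lambda}$ nor the feasible set while making it compact. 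Finally, writing $M-\Vert x\Vert_2^2=\sigma_0+\sum_j\sigma_j g_j$ (a Putinar representation inherited from $\mathbf K$) I would take, for constants $C_k,D_{kj}$ chosen large enough and $N=M+\sum_k C_k+\sum_{k,j}D_{kj}$,
\begin{equation*}
u(x,t,r)=N-\Vert x\Vert_2^2-\sum_{k=1}^{m}t_k^2-\sum_{k,j=1}^{m}r_{kj}^2 ,
\end{equation*}
whose level set $\{u\ge 0\}$ is a ball. Each term $C_k-t_k^2$ equals $(t_k-LB)(UB-t_k)$ plus an affine function of $t_k$, and that affine function is a nonnegative constant plus a nonnegative multiple of $UB-t_k$ or of $t_k-LB$ once $C_k$ is large; similarly $D_{kj}-r_{kj}^2$ in terms of $r_{kj}$ and $UB_j-LB-r_{kj}$. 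Adding these to the representation of $M-\Vert x\Vert_2^2$ yields a Putinar representation of $u$ over $\overline{\mathbf K}$, so $\overline{\mathbf K}$ satisfies Putinar's property.

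The one delicate point is making sure every multiplier in front of a defining polynomial of $\overline{\mathbf K}$ in the representation of $u$ is a \emph{nonnegative constant} (a degree-zero sum of squares); this is exactly why one first confines $t_k$ and $r_{kj}$ to the sign-controlled intervals above and adjoins the redundant product constraints before assembling $u$. Everything else is the block-separability bookkeeping and the constant-chasing, and the monotonicity of $\lambda$ is precisely what licenses passing from the minimum of a weighted sum to the weighted sum of the minima; I do not anticipate any genuine obstruction.
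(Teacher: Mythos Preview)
Your proof is correct and is precisely the argument the paper leaves implicit (it writes only that the result is ``straightforward to check'' from the preceding $k$-centrum lemma): run $m$ parallel copies of that reduction, use separability together with $\lambda_k-\lambda_{k+1}\ge 0$ to split the inner minimum, then apply the LP representation of $S_k$ and the Abel identity. Two small remarks: the phrase ``changes neither $\hat\varrho_\lambda$ nor the feasible set while making it compact'' is a slip---the box constraints do shrink the feasible set (that is how it becomes compact), what is preserved is the optimal value; and your Putinar step, while correct, is more laborious than needed, since once $\overline{\mathbf K}$ is bounded one may simply append the single ball constraint $M'-\Vert(x,t,r)\Vert_2^2\ge 0$ and invoke the easy criterion from the preliminaries, as the paper does implicitly in the previous lemma.
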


Another class of problems that can also be analyzed giving rise to a more compact formulation that the one in the general approach \eqref{omrp} is the trimmed mean problem. A trimmed mean objective appears for $\lambda=(\overbrace{0, \ldots, 0}^{k_1}, 1, \ldots, 1,  \overbrace{0, \ldots, 0}^{k_2})$.

This family of problems has attracted a lot of attention in last times in the field of location analysis because of its connections to robust solution concepts. Its rationale rests on the trimmed mean concepts in statistics where the extreme observations (\textit{outliers}) are removed to compute the central estimates (\textit{mean}) of a sample. Thus, we are looking for a point $x^*$ that minimizes the sum of the central functions, once we have excluded the $k_2$ smallest and the $k_1$ largest. Formally, the problem is:
$$ \varrho = \min_{x\in \mathbb{R}^n} \sum_{i=k_1+1}^{n-k_2} f_{(i)}(x).$$

Now, we observe that $\sum_{i=k_1+1}^{n-k_2} f_{(i)}(x)=S_{n-k_2}(x)-S_{k_1}(x)$. Therefore, using the above transformation we have:
\begin{eqnarray*}
S_{k_1}(x)=&= & \max \{ \sum_{j=1}^m v_j f_j(x): \sum_{j=1}^m v_j=k_1, 0\le v_j\le 1,\; \forall j\}, \label{eq:sk1}\\
S_{n-k_2}(x)&=&\min\{ (n-k_2)t+\sum_{j=1}^m r_j: t+r_j\ge f_j(x), \; r_j\ge 0,\;  \forall j\}. \label{eq:sk2}
\end{eqnarray*}
Thus, using both reformulations the trim-mean problem results in:
\begin{align}
\hat{\varrho} := &  \min \, (n-k_2)t+\sum_{j=1}^m r_j -\sum_{j=1}^m v_j f_j(x) \nonumber\\
&s.t.\;
  \sum_{j=1}^m v_j=k_1, \nonumber \\
 & \qquad  t+r_j\ge f_j(x), \quad j=1,\ldots,m, \label{pro:ktrimmean} \tag{${\rm kTr}$} \\
 & \qquad r_j\ge 0,\quad j=1,\ldots,m, \nonumber \\
 & \qquad v_j(v_j-1)=0, \quad j=1,\ldots,m, \nonumber \\
 & \qquad x\in \mathbf{K}. \nonumber
\end{align}

Let us denote by $\overline{\mathbf{K}}$ the basic closed semi-algebraic set that defines the feasible region of  \eqref{pro:ktrimmean}.
\begin{lemma}
If $\mathbf{K}\subset \mathbb{R}^n$ satisfies Putinar's property then $\overline{\mathbf{K}}\subset \mathbb{R}^{n+m+1}$ satisfies  Putinar's property. Moreover $\varrho = \hat{\varrho}$.
\end{lemma}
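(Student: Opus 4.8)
The plan is to deduce both claims from the $k$-centrum lemma already established for problem \eqref{pro:kcentrum2}, essentially by bookkeeping. For the value identity, start from the pointwise decomposition $\sum_{i=k_1+1}^{n-k_2}f_{(i)}(x)=S_{n-k_2}(x)-S_{k_1}(x)$, valid for every $x\in\mathbf{K}$. Fix $x\in\mathbf{K}$ and examine the inner minimization of \eqref{pro:ktrimmean}: its objective $(n-k_2)t+\sum_j r_j-\sum_j v_j f_j(x)$ and its constraint system both split into a block in the variables $(t,r)$ and a block in $v$. Minimizing the $(t,r)$-part $(n-k_2)t+\sum_j r_j$ over $\{\,t+r_j\ge f_j(x),\ r_j\ge 0\,\}$ is exactly the linear program that the $k$-centrum lemma, with $k$ set to $n-k_2$, evaluates to $S_{n-k_2}(x)$. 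Minimizing the $v$-part $-\sum_j v_j f_j(x)$ over $\{\,v\in\{0,1\}^m,\ \sum_j v_j=k_1\,\}$ equals $-S_{k_1}(x)$, since placing the $1$'s on the $k_1$ coordinates with the largest values $f_j(x)$ is optimal and produces precisely the sum of the $k_1$ largest values. Adding these two contributions and then minimizing over $x\in\mathbf{K}$ yields $\hat{\varrho}=\min_{x\in\mathbf{K}}\bigl(S_{n-k_2}(x)-S_{k_1}(x)\bigr)=\varrho$.

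The only step above that is not purely mechanical is the evaluation of the $v$-block, and even there the identity $\max\{\,\sum_j v_j f_j(x):v\in\{0,1\}^m,\ \sum_j v_j=k_1\,\}=S_{k_1}(x)$ is immediate from the definition of $S_{k_1}$. For consistency with the linear-programming derivation used in the text it is worth recording that relaxing the integrality constraint to $0\le v_j\le 1$ does not change this value: the feasible set then becomes the hypersimplex $\{\,v:\sum_j v_j=k_1,\ 0\le v_j\le 1\,\}$, whose vertices are exactly the $0/1$ vectors with $k_1$ ones, and a linear functional over a polytope attains its maximum at a vertex.

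For Putinar's property I would repeat, almost verbatim, the argument from the $k$-centrum lemma. Compactness of $\mathbf{K}$ together with $q_k>0$ on $\mathbf{K}$ gives bounds $LB_j\le f_j(x)\le UB_j$ for $x\in\mathbf{K}$; set $LB=\min_j LB_j$ and $UB=\max_j UB_j$. As there, in \eqref{pro:ktrimmean} the variable $t$ can be confined to $[LB,UB]$ and each $r_j$ to $[0,\,UB_j-LB]$ without changing $\hat{\varrho}$, while the constraints $v_j(v_j-1)=0$ already pin $v$ inside $[0,1]^m$; moreover, on $\mathbf{K}$ each inequality $t+r_j\ge f_j(x)$ is the genuine polynomial inequality $(t+r_j)q_j(x)-p_j(x)\ge 0$ because $q_j>0$ there. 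Appending these box constraints and a (now redundant) ball inequality $M'-\|(x,t,r,v)\|_2^2\ge 0$, with $M'$ large enough by the bounds just obtained, to the description of $\overline{\mathbf{K}}$ leaves $\hat{\varrho}$ unchanged and compactifies the set; and since the ball inequality is one of the defining generators, $M'-\|(x,t,r,v)\|_2^2$ has the trivial Putinar representation that is its own sum-of-squares multiplier. Hence $\overline{\mathbf{K}}$ satisfies Putinar's property. The only real pitfall in the whole argument is the familiar one: these extra box/ball constraints are not redundant for $\overline{\mathbf{K}}$ as originally written --- they genuinely bound an otherwise unbounded $t$ and $r$ --- so one must check, exactly as in the $k$-centrum lemma, that adding them does not move the optimal value $\hat{\varrho}$.
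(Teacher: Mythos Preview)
Your argument is correct and is exactly the approach the paper intends: the paper states this lemma without proof, implicitly relying on the template it already worked out for the $k$-centrum case \eqref{pro:kcentrum2}, and your proof carries out precisely that reduction --- splitting the inner problem into the $(t,r)$-block (evaluated to $S_{n-k_2}(x)$ via the $k$-centrum lemma) and the $v$-block (evaluated to $-S_{k_1}(x)$ by the obvious combinatorial argument), then handling Putinar's property by appending the same box/ball constraints. Your closing caveat, that these added constraints genuinely shrink $\overline{\mathbf{K}}$ and one must check they do not move $\hat{\varrho}$, is the right point of care and mirrors the paper's own treatment in the $k$-centrum proof.
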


\begin{remark} We observe that the special formulations for k-centrum \eqref{pro:kcentrum2} and trim-mean \eqref{pro:ktrimmean} are specially suitable for handling these two classes of problems. First of all, we note that if $k_1=0$ the problem reduces to a $k_2$-centrum, variables $v_j$ are not needed and formulation \eqref{pro:ktrimmean} simplifies exactly to   \eqref{pro:ktrimmean}. Second, we point out that both formulations take advantage of the special structure of the considered problems and thus they are simpler than the general formulation \eqref{omrp} applied to these problems. Actually, the number of variables in \eqref{pro:kcentrum2}, for solving the k-centrum problem (resp. \eqref{pro:ktrimmean} for solving the trim-mean problem), is $m+n+1$ (resp. $2m+d+1$) while the number of variables for the same problem using \eqref{omrp} is $m^2+n$. This reduction is remarkable due to the current status of SDP solvers which are not at  a professional level. In spite of that, those problems, where no special structure is known or it cannot  be exploited, can also be tackled using the general formulation  \eqref{omrp} at the price of using larger number of variables.
\end{remark}

\subsection{A convergence result of semidefinite relaxations `$\grave{\mbox{a}}$ la Lasserre'} \mbox{\null}

We are now in position to define the hierarchy of semidefinite relaxations
for solving the $\mrf$ problem. Let $\mathbf{y}=(y_\alpha)$ be a real sequence
indexed in the monomial basis $(x^\beta w^\gamma)$ of $\mathbb{R}[x,w]$ (with $%
\alpha=(\beta,\gamma)\in\mathbb{N}^n\times\N^{m^2}$).  Let $p_\lambda(x,w)$ and $q_\lambda(x,w)$ be defined as in \eqref{eq:PyQ}.

Let $h_{0}(x,w):=p_{\lambda}(x,w)$, and denote $\xi_j:=\lceil (\mathrm{deg}\, g_j)/2\rceil$, $\nu_{j}:=\lceil(\mathrm{deg}%
\,h_{j})/2\rceil$  and $\nu'_{j}:=\lceil(\mathrm{deg}%
\,h'_{j})/2\rceil$  where $\{g_1,\ldots,g_{\ell}\}$ are the polynomial constraints that define $\mathbf{K}$ and $\{h_1, \ldots, h_{m}\}$  and  $\{h'_1, \ldots, h'_{m}\}$ are, respectively,   the polynomial constraints  (\ref{cons:3.4}) and (\ref{const:3.5}) in $\mathbf{\overline K}\setminus \mathbf{K}$, respectively.

Let us denote by $I(0)=\{1,\ldots,n\}$ and $I(j)=\{(j,k)\}_{k=1,\ldots,m}$, for all $j=1,\ldots,m$. With $x(I(0))$, $w(I(j))$ we refer, respectively, to the monomials $x$, $w$ indexed only by subsets of elements in the sets $I(0)$ and $I(j)$, respectively. Then, for $g_k$, with $k=1,\ldots,\ell$, let $\Mo_r(y,I(0))$ (respectively $\Mo_r(g_{k}y,I(0))$) be the moment (resp. localizing) submatrix obtained from $\Mo_r(y)$ (resp. $\Mo_r(g_ky)$) retaining only those rows and columns indexed in the canonical basis of $\mathbb{R}[x(I(0))]$ (resp. $\mathbb{R}[x(I(0))]$). Analogously, for $h_j$ and $h'_j$, $j=1,\ldots,m$, as defined in (\ref{cons:3.4}) and (\ref{const:3.5}), respectively, let $\Mo_r(y,I(0)\cup I(j)\cup I(j+1))$ (respectively $\Mo_r(h_{j}y,I(0)\cup I(j)\cup I(j+1))$, $\Mo_r(h'_{j}y,I(0)\cup I(j)\cup I(j+1))$ ) be the moment (resp. localizing) submatrix obtained from $\Mo_r(y)$ (resp. $\Mo_r(h_jy)$, $\Mo_r(h'_jy)$) retaining only those rows and columns indexed in the canonical basis of $\mathbb{R}[x(I(0))\cup w(I(j))\cup w(I(j+1))]$ (resp. $\mathbb{R}[x(I(0))\cup w(I(j))\cup w(I(j+1))]$).

For $r\geq \max\{ r_{0},\nu_0\}$ where
$r_0:=\displaystyle \max_{k=1,\ldots,\ell} \xi_k$,  $\nu_0:=\displaystyle \max \{ \max_{j=1,\ldots ,m}\nu_{j},  \max_{j=1,\ldots ,m}\nu'_{j}\}$, we introduce the following hierarchy of semidefinite programs:
\begin{equation}\label{lower0}\tag{$\mathbf{Q}_{r}$}
\begin{array}{lll}
\displaystyle\min_{\mathbf{y}} & \L_{\mathbf{y}}(p_{\lambda}) &  \\
\mathrm{s.t.} & \Mo_{r}(\mathbf{y},I(0)) & \succeq 0, \\
& \Mo_{r-\xi_k}(g_k\mathbf{y},I(0)) & \succeq 0,\quad k=1,\ldots ,\ell, \\
& \Mo_r(\mathbf{y},I(0)\cup I(j)\cup I(j+1))& \succeq 0,\quad j=1,\ldots ,m,\\
& \Mo_{r-\nu_{j}}(h_{j}\mathbf{y},I(0)\cup I(j)\cup I(j+1))& \succeq 0,\quad j=1,\ldots ,m,\\
& \Mo_{r-\nu'_{j}}(h'_{j}\mathbf{y},I(0)\cup I(j)\cup I(j+1))& \succeq 0,\quad j=1,\ldots ,m, \\
& \L_y(\sum_{i=1}^m w_{ij}-1)& = 0, \quad j=1,\ldots ,m, \\
& \L_y(\sum_{j=1}^m w_{ij}-1)& = 0, \quad i=1,\ldots ,m, \\
& \L_y(w_{ij}^2-w_{ij})& =0, \quad i,j=1,\ldots,m,\\
& \L_{y}(q_{\lambda}) & =1,
\end{array}%
\end{equation}%
with optimal value denoted $\inf \mathbf{Q}_{r}$ (and $\min \mathbf{Q}_{r}$
if the infimum is attained).

\begin{theorem}
\label{thmain} Let $\mathbf{\overline K}\subset\mathbb{R}^{n+m^2}$ (compact) be the feasible domain of \eqref{omrp}.
Then, with the notation above:

\textrm{(a)} $\inf\mathbf{Q}_r\uparrow \rho_{\lambda}$ as $r\to\infty$.

\textrm{(b)} Let $\mathbf{y}^r,$ be an optimal solution of the SDP relaxation (\ref{lower0}). If
\begin{eqnarray}  \label{finiteconv1}
\mathrm{rank}\,\Mo_r(\mathbf{y}^r,I(0))&= & \mathrm{rank}\,\Mo_{r-r_0}(\mathbf{y}%
^r,I(0))\nonumber\\ {\small \hspace*{-0.75cm}
\mathrm{rank}\,\Mo_r(\mathbf{y}^r,I(0)\cup I(j)\cup I(j+1))} & = & {\small \mathrm{rank}\,\Mo_{r-\nu_0}(\mathbf{y}%
^r,I(0)\cup I(j)\cup I(j+1)) \; j=1,\ldots,m \label{finiteconv2}}
\end{eqnarray}
and if $\mathrm{rank}(\Mo_r(y^*,I(0)\cup (I(k)\cup I(k+1))\cap (I(j)\cup I(j+1))))=1$ for all $j\neq k$
then $\min\Q_r=\rho_{\lambda}$.

Moreover, let $\Delta_j:=\{(x^*(j),w^*(j))\}$ be the set of solutions obtained by the condition (\ref{finiteconv2}). Then, every $(x^*,w^*)$ such that $(x^*_i,w^*_i)_{i\in I(j)}=(x^*(j),w^*(j))$ for some $\Delta_j$ is an optimal solution of Problem $\mrf$.
\end{theorem}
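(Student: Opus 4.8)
The plan is to recognise the hierarchy \eqref{lower0} as the \emph{sparse} moment--SDP relaxation attached to the rational minimisation problem $\mathcal P_\lambda$ of Proposition~\ref{prop1}, whose optimal value is $\rho_\lambda$, and then to run the two halves of the Lasserre machinery on it. First I would record the easy facts. Given any $\mu$ feasible for $\mathcal P_\lambda$, its moment sequence $\y$ is feasible for \eqref{lower0} for every admissible order $r$: the moment and localising submatrices are positive semidefinite because $\mu\in M(\overline{\mathbf K})$, and the linear identities hold because $w_{ij}^2-w_{ij}$, $\sum_i w_{ij}-1$, $\sum_j w_{ij}-1$ vanish on $\overline{\mathbf K}$ while $\int q_\lambda\,d\mu=1$; since the objectives coincide, $\inf\Q_r\le\widehat\rho_\lambda=\rho_\lambda$ by Proposition~\ref{prop1} and Theorem~\ref{th:feas}. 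Monotonicity $\inf\Q_{r+1}\ge\inf\Q_r$ follows because the order-$(r{+}1)$ matrices contain the order-$r$ ones as principal submatrices, so $(\inf\Q_r)_r$ increases and is bounded above by $\rho_\lambda$; the content is to identify the limit (part (a)) and to justify the finite-convergence test (part (b)).

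For (a) I would invoke a sparse Positivstellensatz. The index sets $C_j:=I(0)\cup I(j)\cup I(j+1)$, $j=1,\dots,m$, enjoy the running intersection property: each contains the common block $I(0)$ and $C_j\cap(C_1\cup\cdots\cup C_{j-1})=I(0)\cup I(j)\subseteq C_{j-1}$, giving a path-like clique tree. All the data respect this pattern: every monomial of $p_\lambda$ and of $q_\lambda$ involves $x$ together with at most one variable $w_{ij}$, hence sits in $\R[x(I(0))\cup w(I(j))]\subseteq\R[x(I(0)\cup I(j)\cup I(j+1))]$; the inequalities \eqref{cons:3.4} couple only columns $j$ and $j+1$ of $w$; \eqref{const:3.5} can be taken clique by clique; the $g_k$ involve only $x(I(0))$; and the remaining constraints are linear or supported on a single column. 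Since $\overline{\mathbf K}$ satisfies Putinar's property (the lemma preceding Proposition~\ref{prop1}) and its description carries a ball constraint on the $x$-block (inherited from $\mathbf K$) and the bound \eqref{const:3.5} on the $w$-block, the sparse version of Putinar's representation applies (see, e.g., \cite{lasserrebook}). Equivalently one argues in the limit: by those ball constraints the optimal sequences $\y^r$ are entrywise uniformly bounded, a diagonal subsequence converges pointwise to some $\y^\ast$, each restriction $\y^\ast|_{C_j}$ is by Theorem~\ref{thput} the moment sequence of a measure on the projection of $\overline{\mathbf K}$ onto $C_j$, and the running intersection property glues these into one $\mu\in M(\overline{\mathbf K})$ with $\int q_\lambda\,d\mu=1$ and $\int p_\lambda\,d\mu=\lim_r\inf\Q_r\le\rho_\lambda$; Proposition~\ref{prop1} then forces equality, so $\inf\Q_r\uparrow\rho_\lambda$.

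For (b) I would apply the flat extension theorem clique by clique to the optimal $\y^r$. The rank equalities \eqref{finiteconv2}, $\mathrm{rank}\,\Mo_r(\y^r,C_j)=\mathrm{rank}\,\Mo_{r-\nu_0}(\y^r,C_j)$ together with the analogous equality for $I(0)$, certify flatness, so each $\y^r|_{C_j}$ has a finitely atomic representing measure $\mu_j$ supported on the projection of $\overline{\mathbf K}$ onto $C_j$, with as many atoms as the rank; the set of these atoms is the $\Delta_j$ of the statement. The extra hypothesis that $\mathrm{rank}\,\Mo_r(\y^r,I(0)\cup((I(k)\cup I(k+1))\cap(I(j)\cup I(j+1))))=1$ for $j\ne k$ forces the marginal of $\y^r$ on every two-clique overlap to be a single Dirac mass; together with the path-like clique tree this makes the $\mu_j$ pairwise consistent on all overlaps, so they assemble into one finitely atomic $\mu$ on $\overline{\mathbf K}$ whose atoms are exactly the tuples $(x^\ast,w^\ast)$ obtained by stitching together compatible choices from the $\Delta_j$, as claimed. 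Then $\int q_\lambda\,d\mu=1$, $\int p_\lambda\,d\mu=\min\Q_r$, and $\int p_\lambda\,d\mu\ge\rho_\lambda\int q_\lambda\,d\mu=\rho_\lambda$ since $\rho_\lambda=\min_{\overline{\mathbf K}}p_\lambda/q_\lambda$ (Lemma~\ref{lemm-frac}, Proposition~\ref{prop1}); with $\min\Q_r\le\rho_\lambda$ this yields $\min\Q_r=\rho_\lambda$, each atom $(x^\ast,w^\ast)$ is a minimiser of $p_\lambda/q_\lambda$ on $\overline{\mathbf K}$ hence optimal for \eqref{omrp}, and by Theorem~\ref{th:feas} its $x$-component solves \eqref{pro:omrp1}, i.e.\ $\mrf$.

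I expect the main obstacle to be the measure-gluing across overlapping cliques, which occurs in both parts. In (a) one must show that limit moment data which is a genuine moment sequence on each $C_j$ actually comes from a single measure on $\overline{\mathbf K}$; this is exactly where the running intersection property enters and where one must be careful about which moments are genuinely shared between cliques and about marginal consistency. In (b) the same gluing has to be carried out with finitely atomic measures, and one must verify that the rank-one conditions on the deeper overlaps $I(0)\cup((I(k)\cup I(k+1))\cap(I(j)\cup I(j+1)))$ are precisely what forces the atoms to line up; the combinatorial bookkeeping of the index sets $I(0),I(j),I(j+1)$ and of which localising matrices carry the per-clique ball constraints is the delicate, error-prone part, whereas the final inequality chase comparing $\int p_\lambda\,d\mu$ with $\rho_\lambda\int q_\lambda\,d\mu$ is routine once the measure is in hand.
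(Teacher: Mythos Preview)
Your proposal is correct and follows essentially the same route as the paper: both identify \eqref{lower0} as the sparse moment--SDP relaxation for the rational minimisation problem $\mathcal P_\lambda$, verify the running intersection property for the clique cover $C_j=I(0)\cup I(j)\cup I(j+1)$, and then invoke the Jibetean--De~Klerk rational-function convergence result together with Lasserre's sparse convergence/extraction theorem (Theorem~3.2 in \cite{sparse}). The paper's proof is essentially a two-sentence citation of \cite{jibetean} and \cite{sparse}; you have unpacked the mechanics of those results (monotonicity, upper bound, limit-measure gluing for~(a), flat extension plus rank-one overlap consistency for~(b)), which is exactly what lies behind those citations, and you correctly flag the measure-gluing across cliques as the place where care is needed.
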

\begin{proof}
The convergence of the semidefinite relaxation (\ref{lower0}) was proved by Jibetean and De Klerk \cite{jibetean} for a general rational function over a closed semialgebraic set. Here, we use that result applied to the rational function in (\ref{eq:pdivq}). Moreover, the index set of the indeterminates in the feasible set given by constraints (\ref{cons:first})-(\ref{cons:3.9}) admits the decomposition  $I(k)$, $k=0\ldots,m$ that satisfies the running intersection property (see \cite[(1.3)]{sparse}) and therefore, the result follows by combining Theorem 3.2 in \cite{sparse} and the results in \cite{jibetean}.
\end{proof}

The above theorem allows us to approximate and solve the original problem $\mrf$ up to any degree of accuracy by solving block diagonal (sparse) SDP programs which are convex programs for each fixed relaxation order $r$ and that can be solved with available open source solvers as SeDuMi, SDPA, SDPT3 \cite{sdpt3}, etc.

\section{Generalized Location Problems with rational objective functions \label{s:locomf}}

This sections considers a wide family of continuous location problems that has attracted a lot of attention  in the recent literature of location analysis but for which there are not common solution approaches. The challenge is to design a common resolution approach  to solve them for different distances and in any finite dimension.

We are given a set $A=\{a_1,\ldots,a_n\}\subset \mathbb{R}^d$ endowed with an $\ell_{\tau}$-norm (here $\ell_\tau$ stands for the norm $\|x\|_{\tau} = \left(\sum_{i=1}^d |x_i|^\tau\right)^{\frac{1}{\tau}}$, for all $x\in \R^d$); and a feasible domain $\mathbf{K}\subset \mathbb{R}^d$, closed and semi-algebraic. The goal is to find a point $x^*\in \mathbf{K}\subset \mathbb{R}^d$ minimizing some globalizing function of the distances to  the set $A$. Here, we consider that the globalizing function is rather general and that it is given as a rational function.

Some well-known examples are listed below (see e.g. \cite{BC2009}, \cite{drezner2007}, \cite{EMPR09}, \cite{LMP06} or \cite{NP05}) :

\begin{itemize}
\item $f(u_1,\ldots,u_n)={ \displaystyle \sum_{i<j}^n |u_i-u_j|}$, absolute deviation or envy problem.

\item $f(u_1,\ldots,u_n)={ \displaystyle \sum_{i=1}^n (u_i-\bar u)^2}$, variance problem.

\item $f(u_1,\ldots,u_n)={ \displaystyle \sum_{j=1}^n \frac{w_j}{u_j^2}}$, obnoxious facility location.

\item $f(u_1,\ldots,u_n)={ \displaystyle \sum_{j=1}^n \frac{b_j}{1+h_j |u_j|^{\lambda}}}$, Huff competitive location.
\end{itemize}
The main feature and what distinguishes location problem from other general purpose optimization problems, is that the dependence of the decision variables is given throughout the norms to the demand points in $A$, i.e. $\|x-a_i\|_{\tau}$.
In this section, we consider a generalized version of continuous single facility location problems with rational objective functions over closed semi-algebraic feasible sets.

Let $f_j(u):=\frac{p_j(u)}{q_j(u)}: \mathbb{R}^n\mapsto \mathbb{R}$, $j=1,\ldots,m$ be rational functions with $q_j(u)>0$ for all $j$. We shall define the dependence of $f_j$ to the decision variable $x\in \mathbb{R}^d$ via $u=(u_1,\ldots,u_n)$, where $u_i:\mathbb{R}^d\mapsto \mathbb{R}$, $u_i(x):=\|x-a_i\|_{\tau}$, $i=1,\ldots,n$. Therefore, the $j$-th component of the ordered  median objective function of our problems reads as:
$$\begin{array}{llll} \tilde f_j(x):&\mathbb{R}^d &\mapsto &\mathbb{R} \\
& x &\mapsto & \tilde f_j(x):=f_j(\|x-a_1\|_{\tau},\ldots,\|x- a_n\|_{\tau}).
\end{array} $$

Consider the following problem:
\begin{equation} \label{pro:LocPop}\tag{$\mathbf{LOCOMRF}$}
\rho_\lambda \,:=\displaystyle\min_{x}\{\dsum_{j=1}^m \lambda_j(x) \tilde f_{(j)}(x): x\in \mathbf{K}\,\},
\end{equation}%
where:

$\bullet$  $\K \subseteq \R^n$ satisfies Putinar's property,

$\bullet$ $\tau:=\frac{r}{s}$, $r,s\in \mathbb{N}$, $r\ge s$ and $gcd(r,s)=1$.

This problem does not reduce to the family $\mrf$ considered above since the dependence on the decision variable $x$ is not given in the form of polynomials. Note that $\ell_{\tau}$-norms are not, in general, polynomials.

To avoid this inconvenience,  we introduce the following auxiliary problem.

\begin{eqnarray}
 \quad \overline{\rho}_\lambda = &\displaystyle \min_{x,w,u,v} & \sum_{j=1}^{m}\lambda_{j}(x)\sum_{i=1}^{m} f_{i}(u)w_{ij} \label{pro:locgen}\\
 &\mbox{s.t. } &\sum_{j=1}^{m}w_{ij} = 1, \text{ for } i=1, \ldots, m,\nonumber\\
 & &\sum_{i=1}^{m}w_{ij} = 1, \text{ for } j=1, \ldots, m,\nonumber\\
 & &\sum_{i=1}^{m} w_{ij} f_{i}(u) \geq  \sum_{i=1}^{m} w_{i j+1} f_{i}(u), j=1, \ldots, m,\nonumber\\
 & & w_{ij}^2 - w_{ij} = 0, \text{ for } i,j=1, \ldots, m, \nonumber\\
 & & v_{kl}^s\ge (x_k-a_{kl})^r,\; k=1,\ldots,n,\; l=1,\dots,d, \label{c:vmayor} \\
  & & v_{kl}^s\ge (a_{kl}-x_l)^r,\; k=1,\ldots,n,\; l=1,\dots,d, \label{c:vmenor} \\
  & & u_k^r=(\sum_{l=1}^d v_{kl})^s,\; k=1,\ldots,n, \nonumber\\
 & &\sum_{i, j=1}^m w^2_{ij} \leq m, \nonumber\\
 & &w_{ij} \in \R, \; \forall\ i,j=1,\ldots,m, \nonumber\\
 & & v_{kl} \in \R, u_k \in \R, k=1, \ldots, n, l=1, \ldots, d, \nonumber \\
 & &x\in \mathbf{K}.\nonumber
\end{eqnarray}


We note in passing that the above problem simplifies for those cases where $r$ is even. In these cases, we can replace the two sets of constraints, namely
(\ref{c:vmayor}) and (\ref{c:vmenor}) by the simplest constraint
$$
v_{kl}^s=(x_k-a_{kl})^r, \quad \forall\; k,l.
$$
This reformulation reduces by $(n\times d)$ the number of constraints defining the feasible set. Moreover, these constraints do not induce semidefinite constraints in the moment approach but linear matrix inequalities which are easier to handle.  Following the same scheme of the proof  in Theorem \ref{th:feas} we get the following result, whose proof is left to the reader.

\begin{theorem}
Let $x$ be a feasible solution of (\ref{pro:LocPop}) then there exists a solution $(x,u,v,w)$ for (\ref{pro:locgen}) such that their objective values are equal. Conversely, if $(x,u,v,w)$ is a feasible solution for (\ref{pro:locgen}) then there exists a solution $(x)$ for (\ref{pro:LocPop}) having the same objective value. In  particular $\varrho_{\lambda}=\overline{\varrho}_{\lambda}$. Moreover, if $\mathbf{K}\subset \mathbb{R}^d$ satisfies Putinar's property then $\overline{\mathbf{K}}\subset \mathbb{R}^{d+m^2+n(d+2)}$ also satisfies Putinar's property.
\end{theorem}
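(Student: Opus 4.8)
The plan is to mirror the proof of Theorem~\ref{th:feas}, adding the bookkeeping needed to handle the auxiliary variables $u$ and $v$ that encode the $\ell_\tau$-norms, and then to invoke the Putinar-lifting arguments already used in the previous lemmas. The key observation is that the constraints linking $x$, $v$ and $u$ are \emph{functional}: for a fixed $x\in\mathbf{K}$ there is exactly one admissible choice of $v$ and $u$ (up to the slack allowed by the inequalities, which we argue is inessential), so that feasible points of \eqref{pro:LocPop} and \eqref{pro:locgen} are in value-preserving correspondence.

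First I would take a feasible $\bar x$ of \eqref{pro:LocPop} and build a companion point. Set $\bar v_{kl}:=|\bar x_k-a_{kl}|^{r/s}$ (well defined and real since $r\ge s\ge 1$ and the base is nonnegative), so that $\bar v_{kl}^{\,s}=|\bar x_k-a_{kl}|^{r}$, which dominates both $(\bar x_k-a_{kl})^r$ and $(a_{kl}-\bar x_k)^r$; hence \eqref{c:vmayor}--\eqref{c:vmenor} hold. Then set $\bar u_k:=\bigl(\sum_{l=1}^d \bar v_{kl}\bigr)^{s/r}=\|\bar x-a_k\|_\tau$, which gives $\bar u_k^{\,r}=\bigl(\sum_l \bar v_{kl}\bigr)^{s}$. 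With this choice $f_i(\bar u)=f_i(\|\bar x-a_1\|_\tau,\ldots,\|\bar x-a_n\|_\tau)=\tilde f_i(\bar x)$ for every $i$. Now define $\bar w$ exactly as in the proof of Theorem~\ref{th:feas} from a permutation $\sigma$ sorting $\tilde f_{\sigma(1)}(\bar x)\ge\cdots\ge\tilde f_{\sigma(m)}(\bar x)$; the $w$-constraints and the ordering constraint \eqref{cons:3.4} (here phrased with $f_i(u)$) are verified verbatim as there, and the objective evaluates to $\sum_j\lambda_j(\bar x)\sum_i f_i(\bar u)\bar w_{ij}=\sum_j\lambda_j(\bar x)\tilde f_{(j)}(\bar x)$, matching $\rho_\lambda$'s integrand at $\bar x$.

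Conversely, given a feasible $(\bar x,\bar u,\bar v,\bar w)$ of \eqref{pro:locgen}, I would show $\bar u_k=\|\bar x-a_k\|_\tau$ so that $\bar x$ is feasible for \eqref{pro:LocPop} with equal objective. From \eqref{c:vmayor}--\eqref{c:vmenor}, since $s$ divides nothing awkward and $v_{kl}^s\ge\max\{(x_k-a_{kl})^r,(a_{kl}-x_k)^r\}=|\bar x_k-a_{kl}|^r\ge 0$, we get $\bar v_{kl}\ge |\bar x_k-a_{kl}|^{r/s}\ge 0$; then $\bar u_k^{\,r}=\bigl(\sum_l\bar v_{kl}\bigr)^{s}\ge\bigl(\sum_l|\bar x_k-a_{kl}|^{r/s}\bigr)^{s}=\|\bar x-a_k\|_\tau^{\,r}$, so $\bar u_k\ge\|\bar x-a_k\|_\tau$. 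To get equality one observes that increasing any $u_k$ or $v_{kl}$ can only enlarge the feasible set, and that at an optimum the rational objective forces these auxiliary variables down to the norm values; more carefully, one restricts the feasible region (as in the $k$-centrum lemma) by the redundant box constraints $0\le v_{kl}\le\mathrm{UB}$, $0\le u_k\le\mathrm{UB}$, and notes that the minimum of the lifted problem is attained with $\bar v_{kl}=|\bar x_k-a_{kl}|^{r/s}$ and $\bar u_k=\|\bar x-a_k\|_\tau$, giving $\overline\varrho_\lambda=\varrho_\lambda$. Finally, for the Putinar's-property claim, I would repeat the argument of the earlier lemma: $\mathbf{K}$ compact yields a Putinar representation of $M-\|x\|_2^2$; adding the $w$-sphere term $m-\sum_{ij}w_{ij}^2$ and, using the box constraints just introduced, the terms $\sum_{kl}(\mathrm{UB}-v_{kl})(v_{kl}-0)$ and $\sum_k(\mathrm{UB}-u_k)(u_k-0)$ as products of the defining inequalities, one writes $M'-\|x\|_2^2-\|v\|_2^2-\|u\|_2^2-\sum_{ij}w_{ij}^2$ as a sum of squares plus combinations of the constraint polynomials, so $\overline{\mathbf{K}}\subset\mathbb{R}^{d+m^2+n(d+2)}$ has Putinar's property (the dimension count being $d$ for $x$, $m^2$ for $w$, $nd$ for $v$, and $n$ for $u$).

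The main obstacle is the converse direction, specifically arguing that the inequality constraints \eqref{c:vmayor}--\eqref{c:vmenor} and $u_k^r=(\sum_l v_{kl})^s$ do not let a feasible point of \eqref{pro:locgen} have $\bar u_k$ strictly larger than the true norm while keeping the objective small; this is exactly the kind of ``the relaxation is tight at the optimum'' reasoning used in the $k$-centrum lemma, and it must be combined with the observation that $\lambda_j$ and $f_i$ need not be monotone in $u$, so one cannot argue pointwise monotonicity but must instead use the extra box constraints to keep the feasible set compact and then appeal to attainment of the minimum together with the substitution $\bar v_{kl}\mapsto|\bar x_k-a_{kl}|^{r/s}$, $\bar u_k\mapsto\|\bar x-a_k\|_\tau$, which never increases the objective and preserves feasibility. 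Everything else is a routine transcription of Theorem~\ref{th:feas} and the Putinar lemmas, which is why the paper leaves it to the reader.
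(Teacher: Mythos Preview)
The paper gives no proof here; it simply points back to Theorem~\ref{th:feas}. Your forward construction (set $\bar v_{kl}=|\bar x_l-a_{kl}|^{\tau}$, $\bar u_k=\|\bar x-a_k\|_\tau$, pick $\bar w$ from a sorting permutation) and your Putinar argument (add redundant bounds on $u,v$ and then append a ball constraint) are exactly in that spirit and are essentially what the paper intends. One technical wrinkle: writing $(\mathrm{UB}-v_{kl})v_{kl}$ as a \emph{product} of two defining inequalities gives a Schm\"udgen, not a Putinar, certificate; the clean fix is to add the single redundant quadratic $M'-\|x\|_2^2-\|u\|_2^2-\|v\|_2^2-\sum_{ij}w_{ij}^2\ge 0$ to the description of $\overline{\mathbf K}$, after which Putinar's property is immediate.

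The genuine gap is the converse, and your resolution is circular. You correctly observe that the $f_i$ need not be monotone in $u$, so one cannot argue that shrinking $\bar u$ down to the true norm values cannot increase the objective; yet two lines later you assert that the substitution $\bar u_k\mapsto\|\bar x-a_k\|_\tau$ ``never increases the objective and preserves feasibility.'' That is precisely the coordinatewise monotonicity you have just rejected. In fact the converse claim as literally stated fails for odd $r$: the constraints \eqref{c:vmayor}--\eqref{c:vmenor} are strict inequalities, so one may choose $\bar v$ (hence $\bar u$) strictly larger than the norm values, and for decreasing $f_i$ (e.g.\ the obnoxious-location example $f(u)=\sum_j w_j/u_j^2$ listed just above the theorem) the lifted objective is then strictly smaller than $\mathrm{OM}_\lambda(\bar x)$, so no feasible point of \eqref{pro:LocPop} matches it. Thus neither ``same objective value for every feasible lift'' nor $\overline\varrho_\lambda\ge\varrho_\lambda$ follows from your argument. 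What is actually needed---and what the paper does not make explicit---is either an assumption that each $f_i$ is nondecreasing in every coordinate of $u$, or a tightening of \eqref{c:vmayor}--\eqref{c:vmenor} to force $v_{kl}^s=|x_l-a_{kl}|^r$ (the paper notes this is available when $r$ is even). With either of those in place the converse and the equality $\varrho_\lambda=\overline\varrho_\lambda$ go through exactly as in Theorem~\ref{th:feas}; without them the statement is not correct in the stated generality.
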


Now, we can prove a convergence result that allows us to solve, up to any degree of accuracy, the above class of problems. Let $\mathbf{y}=(y_\alpha)$ be a real sequence
indexed in the monomial basis $(x^\beta u^\gamma v^\delta w^\zeta)$ of $\mathbb{R}[x,u,v,w]$ (with $%
\alpha=(\beta,\gamma,\delta,\zeta)\in\mathbb{N}^d\times \mathbb{N}^n\times \mathbb{N}^{nd}\times \mathbb{N}^{m^2}$).

Let $h_{0}(x,u,v,w):=p_{\lambda}(x,u,v,w)$, and denote $\xi_j:=\lceil (\mathrm{deg}\, g_j)/2\rceil$ and $\nu_{j}:=\lceil(\mathrm{deg}%
\,h_{j})/2\rceil$, where $\{g_1,\ldots,g_{\ell}\}$, and $\{h_1, \ldots, h_{3m+m^2+2n(d+1)+1}\}$ are, respectively,  the polynomial constraints that define $\mathbf{K}$ and $\mathbf{\overline K}\setminus \mathbf{K}$ in (\ref{pro:locgen}). For $r\geq r_{0}:=%
\displaystyle\max \{\max_{k=1,\ldots,\ell} \xi_k,$   $\displaystyle \max_{j=0,\ldots ,\l+3m+m^2+1}\nu_{j}\}$, introduce the hierarchy of semidefinite
programs:
\begin{equation}\label{lower}\tag{$\mathbf{Q}_{r}$}
\begin{array}{lll}
\displaystyle\min_{\mathbf{y}} & \L_{\mathbf{y}}(p_{\lambda}) &  \\
\mathrm{s.t.} & \Mo_{r}(\mathbf{y}) & \succeq 0, \\
& \Mo_{r-\xi_k}(g_k,\mathbf{y}) & \succeq 0,\quad k=1,\ldots ,\ell, \\
& \Mo_{r-\nu_{j}}(h_{j},\mathbf{y}) & \succeq 0,\quad j=1,\ldots ,3m+m^2+1, \\
& \L_{y}(q_{\lambda}) & =1,%
\end{array}
\end{equation}%
with optimal value denoted $\inf \mathbf{Q}_{r}$ (and $\min \mathbf{Q}_{r}$
if the infimum is attained).

\begin{theorem}
\label{thmain} Let $\mathbf{\overline K}\subset\mathbb{R}^{d+m^2+n(d+2)}$ (compact) be the feasible domain of Problem (\ref{pro:locgen}). Let $\mathbf{Q}_r$ be
the semidefinite program (\ref{lower}). 
Then, with the notation above:

\textrm{(a)} $\inf\mathbf{Q}_r\uparrow \rho_{\lambda}$ as $r\to\infty$.

\textrm{(b)} Let $\mathbf{y}^r$ be an optimal solution of the SDP relaxation
$\mathbf{Q}_r$ in (\ref{lower}). If
\begin{equation*}  \label{finiteconv}
\mathrm{rank}\,\Mo_r(\mathbf{y}^r)\,=\,\mathrm{rank}\,\Mo_{r-r_0}(\mathbf{y}%
^r)\,=\,t
\end{equation*}
then $\min\Q_r=\rho_{\lambda}$ and one may extract $t$ points $(x^*(k),u^*(k),v^*(k),w^*(k))_{k=1}^t\subset\mathbf{\overline K}$, all global minimizers of the $\mrf$ problem.
\end{theorem}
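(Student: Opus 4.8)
The plan is to reuse the structure of Theorem \ref{thmain} in Section \ref{s:omrf} essentially verbatim, adapting it to the enlarged index set $I(0),\ldots,I(m)$ augmented by the new blocks of $u$ and $v$ variables. First I would invoke the preceding theorem of this section (the reformulation result) to replace Problem (\ref{pro:LocPop}) by Problem (\ref{pro:locgen}): this gives an \emph{exact} equivalence $\varrho_\lambda = \overline{\varrho}_\lambda$, and guarantees that $\overline{\mathbf{K}}\subset\mathbb{R}^{d+m^2+n(d+2)}$ is compact and satisfies Putinar's property. Next I would rewrite the objective $\sum_{j}\lambda_j(x)\sum_i f_i(u)w_{ij}$ as a single rational function $p_\lambda(x,u,v,w)/q_\lambda(x,u,v,w)$ by clearing denominators exactly as in \eqref{eq:PyQ}--\eqref{eq:pdivq} (taking $q_\lambda=\prod_{k=1}^m q_k(u)$), noting that the constraints (\ref{c:vmayor})--(\ref{c:vmenor}) and $u_k^r=(\sum_l v_{kl})^s$ are already polynomial. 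At this point Problem (\ref{pro:locgen}) is exactly an instance of minimizing a rational function $p_\lambda/q_\lambda$ with $q_\lambda>0$ over a compact semialgebraic set enjoying Putinar's property.

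For part (a), I would then apply Lemma \ref{lemm-frac} to pass to the infinite-dimensional moment formulation $\min\{\int p_\lambda\,d\mu : \int q_\lambda\,d\mu=1\}$ over $\M(\overline{\mathbf{K}})$, and then quote the convergence theorem of Jibetean and De Klerk \cite{jibetean}: the hierarchy \eqref{lower}, obtained by replacing the measure by its truncated moment sequence $\mathbf{y}$ and imposing the moment/localizing semidefiniteness conditions from Theorem \ref{thput}, has optimal values $\inf\mathbf{Q}_r$ that increase monotonically to $\widehat{\rho}_\lambda=\rho_\lambda$. Monotonicity is immediate because the feasible set of $\mathbf{Q}_{r+1}$ projects into that of $\mathbf{Q}_r$, and the limit equals $\rho_\lambda$ by Putinar's representation theorem applied to $p_\lambda - (\rho_\lambda-\varepsilon)q_\lambda$, which is strictly positive on $\overline{\mathbf{K}}$. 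Here I would remark that \eqref{lower} uses the full (dense) moment matrix $\Mo_r(\mathbf{y})$ rather than the sparse block version of \eqref{lower0}, so no running-intersection argument is needed for this statement—the dense Lasserre/Jibetean--De Klerk hierarchy applies directly.

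For part (b), I would invoke the flat-extension / rank condition of Curto and Fialkow in the form used by Henrion and Lasserre: if an optimal $\mathbf{y}^r$ satisfies $\mathrm{rank}\,\Mo_r(\mathbf{y}^r)=\mathrm{rank}\,\Mo_{r-r_0}(\mathbf{y}^r)=t$, then $\mathbf{y}^r$ is the moment sequence (up to order $2r$) of a $t$-atomic measure supported on $\overline{\mathbf{K}}$; each atom is feasible for (\ref{pro:locgen}) and, since $\L_{\mathbf{y}^r}(p_\lambda)=\inf\mathbf{Q}_r=\rho_\lambda$ while $\int q_\lambda\,d\mu=1$, each atom achieves the optimal value $\rho_\lambda/1$ of $p_\lambda/q_\lambda$, hence is a global minimizer; projecting onto the $x$-coordinates and using the exact equivalence of the first theorem of this section yields global minimizers of the $\mrf$ (i.e.\ \eqref{pro:LocPop}) problem. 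The atoms can be extracted numerically by the standard eigenvalue method on the factored moment matrix.

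The main obstacle is not conceptual but bookkeeping: one must verify that the polynomial constraints (\ref{c:vmayor})--(\ref{c:vmenor}) together with $u_k^r=(\sum_l v_{kl})^s$ genuinely force $u_k=\|x-a_k\|_\tau$ on the feasible set, and—more delicately—that adding the redundant box constraints $LB\le u_k\le UB$, $0\le v_{kl}\le \text{const}$ (needed to make $\overline{\mathbf{K}}$ compact and Putinar without changing the optimum) is legitimate; this is exactly the point handled by the preceding reformulation theorem of this section, so I would simply cite it rather than reprove it. The only genuine care needed is that the degrees $\xi_k,\nu_j$ and the starting order $r_0$ are set so that every localizing matrix $\Mo_{r-\nu_j}(h_j,\mathbf{y})$ is well defined for $r\ge r_0$; this is routine and already encoded in the statement preceding \eqref{lower}.
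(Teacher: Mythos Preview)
Your proposal is correct and follows essentially the same route as the paper: reduce (\ref{pro:LocPop}) to the polynomial-constrained rational minimization (\ref{pro:locgen}) via the preceding reformulation theorem, then invoke the Jibetean--De Klerk convergence result for the dense moment hierarchy over a compact semialgebraic set with Putinar's property, together with the standard Curto--Fialkow flat-extension criterion for part (b). Your write-up is in fact considerably more detailed than the paper's own proof, which simply cites \cite{jibetean} applied to the rational function \eqref{eq:pdivq} without spelling out the intermediate steps you describe.
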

\begin{proof}
The convergence of the semidefinite relaxation \ref{lower} was proved by Jibetean and De Klerk \cite{jibetean} for a general rational function over a closed semialgebraic set. Here, we apply this result applied to the rational function in (\ref{eq:pdivq}) and therefore, the result follows.
\end{proof}

Here, we also observe that one can exploit the block diagonal structure of the problem since there is a sparsity pattern in the variables of formulation \eqref{pro:locgen}. The reader may note that the only monomials that appear in that formulation are of the form $x^{\alpha}u_i^{\beta}\prod_{j=1}^mv_{ij}^{\gamma_j}$ for all $i=1,\ldots,m$. Hence, a result similar to Theorem \ref{thmain} also holds for the hierarchy $(Q_r)$ of SDP applied to the location problem. Nevertheless, although we have used it in our computational test, we do not give specific details for the sake of presentation and  because of the similarity with Theorem \ref{thmain}.

\begin{ex}\label{ex:2}
We illustrate the above results with an instance of the well-known Weber problem with $\ell_3$-norm and for $20$ random demand points in $\R^3$. Let   {\small $A=\{(0.0758,0.0540,0.5308)$,
$(0.7792,0.9340,0.1299)$, $(0.5688,0.4694,0.0119)$, $(0.3371,0.1622,0.7943)$, $(0.3112,0.5285,0.1656)$, \\ $(0.6020,0.2630,0.6541)$,  $(0.6892,0.7482,0.4505)$, $(0.0838,0.2290,0.9133)$, $(0.1524,0.8259,0.5383)$, \\ $(0.9961,0.0782,0.4427)$, $(0.1066,0.9619,0.0046)$, $(0.7749,0.8173,0.8687)$, $(0.0844,0.3998,0.2599)$, \\ $(0.8000,0.4314,0.9106)$, $(0.1818,0.2638,0.1455)$, $(0.1361,0.8693,0.5797)$, $(0.5499,0.1450,0.8530)$, \\ $(0.5499,0.1450,0.8530)$, $(0.4018,0.0760,0.2399)$, $(0.1233,0.1839,0.2400)\}$.}

\noindent Then, the problem consists of
\begin{eqnarray*}
\min & \dsum_{a \in A}  \|x-a\|_3\\
 s.t.&x\in  \R^3.
\end{eqnarray*}
The feasible region of the first SDP relaxation of this problem, which in this case is $r=2$, contains $20$ moment matrices of size $36\times 36$, $160$ localizing matrices of size $8 \times 8$ and 36 equality constraints. The exact optimal solution is given by $\overline{x}=(0.426397,0.438730,0.455857)$ with optimal value $\overline{f}=8.729976$.  We get with our approach, using  SDPT3\cite{sdpt3}, an optimal solution  $x^{*}=(0.426397,0.438730,0.455857)$, for the first relaxation of the problem with optimal value $f^{*}=8.729976$. Thus, the relative error is $\overline{\epsilon} = \frac{|f^{*} - \overline{f}|}{\overline{f}} =2.199595\times 10^{-13}$.

For the same set of points, we consider a modification of the above problem by adding an extra nonconvex constraint:
\begin{eqnarray*}
\min & \dsum_{a \in A}  \|x-a\|_3\\
 s.t.& x_1^2 - 2x_2^2-2x_3^2 \geq 0,\\
 &x\in  \R^3.
\end{eqnarray*}
The exact optimal solution of this problem is $\widetilde{x}=(0.562304,0.266296,0.295262 )$ with optimal value $\widetilde{f}=10.109333$.  The reader may note that the original  solution $\bar{x}$ is not feasible for the new problem. Using our approach, again for the first relaxation order, we get $x^{**}=(0.562304,0.266296,0.295262)$ with optimal value $f^{**}=10.109333$. Hence, the relative error in this case is $\widetilde{\epsilon} = \frac{|f^{**} - \widetilde{f}|}{\widetilde{f}} =5.801151\times 10^{-9}$.

We show in Figure \ref{fig20} the feasible region of our problem as well as the demand points and the optimal solutions  (the exact and the ones obtained with our relaxed formulations) of the problems. The demand points in $A$ are represented by ' $*$', the optimal solution, $x^*$, of the SDP relaxation without the nonconvex constraint by ' {\tiny$\blacksquare$}' and the optimal solution, $x^{**}$, of the SDP relaxation with the nonconvex constraint is depicted by ' $\bullet$'.

\begin{figure}[h]
\centering
\includegraphics[scale=0.3]{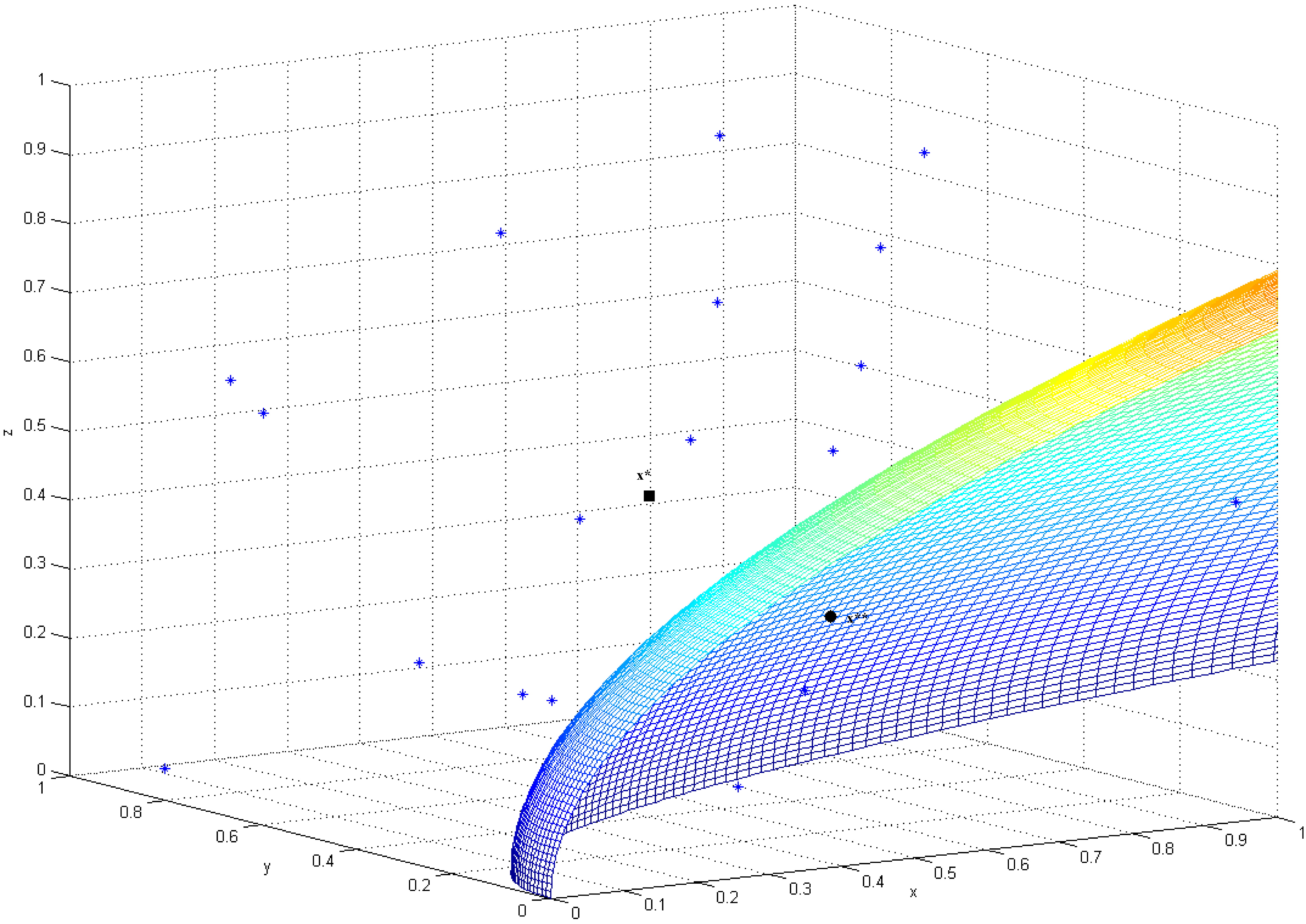}
\caption{Feasible region, demand points and optimal solutions of Example \ref{ex:2}.\label{fig20}}
\end{figure}

\end{ex}

In the following, we will apply this general methodology to get the reformulation of the most standard problems in Location Analysis (see Nickel and Puerto \cite{NP05}) that will be later the basis of our computational experiments: minisum (Weber) and minimax (center), $k$-centrum, $(k_1,k_2)$-trimmed mean and range problems.

\subsection{Weber or median  problem\label{s:proweber}}
In the standard version of the Weber problem, we are given a set of demand points $\{a_1,\ldots,a_n\}$ in $\mathbb{R}^d$ and a set of non-negative weights $\omega_1,\ldots,\omega_n$ and one looks for a point $x^*$ minimizing the weighted Euclidean distance from the demand point. In other words, the problem is:
$$ \min_{x\in \mathbb{R}^d} \sum_{i=1}^n \omega_i \|x-a_i\|_2.$$
This problem has been largely studied in the literature of Location Analysis and perhaps its most well-known algorithm is the so called Weiszfeld algorithm (see \cite{weiszfeld}). This problem is a convex one and Weiszfeld algorithm is a gradient type iterative algorithmic scheme for which several convergence results are known.

Here, we observe that this problem corresponds to a very particular choice of the elements in \eqref{pro:LocPop}: $\lambda=(1, \ldots, 1)$, $f_i(u)=\omega_iu$ and $r=2$, $s=1$. Furthermore, the general formulation (\ref{pro:LocPop}) simplifies since there is no actual sorting. Therefore, we can avoid many of our instrumental variables, namely, the problem can be cast into the form:

\begin{align}
\min& \sum_{i=1}^{n} \omega_i z_{i} \nonumber\\
 s.t.& \;z_i^2=\sum_{j=1}^d (x_j-a_{ij})^2, i=1, \ldots, n, \nonumber\\
& \dsum_{j=1}^d x_j^2  + z_i^2 \leq M, i=1, \ldots, n, \label{wp}\tag{$\mathbf{WP}$}\\
 & z_i\ge 0, \; i=1, \ldots, n,\nonumber\\
& x\in \R^d.\nonumber
\end{align}

\subsection{The minimax or center problem\label{s:procenter}}
The minimax location problem looks for the location of a server $x\in \mathbb{R}^d$ that minimizes the maximum weighted distance to a given set of demands points $\{a_1,\ldots,a_n\}$ in $\mathbb{R}^d$. Formally, the problem can be stated as:
$$ \min_{x\in \mathbb{R}^d} \max_{i=1,\ldots,n} \omega_i \|x-a_i\|_2,
$$
for some weights $\omega_1, \ldots, \omega_n \geq 0$.

Once more, this problem has been extensively analyzed in the literature of Location Analysis and the most well-known algorithms to solve it are those by Elzinga-Hearn (only valid in $\mathbb{R}^2$ with Euclidean distance) and Dyer \cite{D1,D2} and Megiddo \cite{M4}  which are polynomial in fixed dimension. Again, we observe that this problem corresponds to a very particular choice of the elements in \eqref{pro:LocPop}: $\lambda=(1,0, \ldots,0)$, $f_i(u)=\omega_iu$ and $r=2$, $s=1$. In this case, the general formulation (\ref{pro:LocPop}) simplifies and therefore, we can avoid many of our instrumental variables, namely, the problem can be formulated as:

\begin{align}
 \min & \mbox{ } t  \nonumber\\
s.t. & \; z_i^2=\sum_{j=1}^d (x_j-a_{ij})^2,\; i=1, \ldots, n,\nonumber \\
 &  \omega_iz_i\le t,\; i=1, \ldots, n,\nonumber \\
 & \sum_{j=1}^d x_j^2+z_i^2+t^2 \le M,\; i=1, \ldots, n,\label{pro:center}\tag{$\mathbf{CP}$}\\
 & t,z_i,  \ge 0,\quad i=1,\ldots, n, \nonumber \\
& x\in \mathbf{K}. \nonumber
\end{align}

\subsection{The k-centrum problem\label{s:prokcentrum}}

The $k$-centrum location problem consists of finding the point $x^*$ that minimizes the sum of the $k$ largest distances with respect to a given set of demands points $\{a_1,\ldots,a_n\}$ in $\mathbb{R}^d$.
Formally, the problem can be stated as:
$$ \min_{x\in \mathbb{R}^d} \max_{i=1,\ldots,k} d_{(i)}(x),
$$
where $d_{(i)}(x)=\|x-a_{\sigma(i)}\|$ for a permutation $\sigma$ such that $d_{\sigma(1)}(x)\ge \ldots \ge d_{\sigma(n)}(x)$.
This problem has been considered in several papers and textbooks (see \cite{NP05}, \cite{DH02}). Currently, there exist few approaches to solve it in the plane (i.e. $d=2$) and with the Euclidean norm that do not extend further to higher dimension nor other norms (see \cite{DN09-01,DN09-02,ERChD10}).
The objective function of this problem is described by a vector of $\lambda$-parameters $\lambda=(\overbrace{1, \ldots, 1}^{k},0, \ldots,0)$, $f_i(u)=u$, $r=2$, $s=1$. Using the result in the reformulation \eqref{pro:kcentrum2} the problem can be restated as:

\begin{align}
\hat{\varrho} := & \min \;  kt+\sum_{i=1}^n r_i  \nonumber\\
& s.t. \; z_i^2=\sum_{j=1}^d (x_j-a_{ij})^2,\; i=1, \ldots, n,\nonumber \\
&\qquad  t+r_i\ge z_i,\; i=1, \ldots, n,\label{pro:kcentrum}\tag{$\mathbf{kCP}$}\\
& \qquad  \sum_{j=1}^d x_j^2+z_i^2+r_i^2 \le M,\; i=1, \ldots, n,\nonumber\\
& \qquad t,r_i,z_i  \ge 0,\quad i=1,\ldots, n, \nonumber \\
& \qquad x\in \mathbb{R}^d. \nonumber
\end{align}

\subsection{The $(k_1,k_2)$-trimmed-mean problem\label{s:protrim}}

The $(k_1,k_2)$-trimmed-mean location problem looks for a point $x^*$ that minimizes the sum of the central distances, once we have excluded the $k_2$ closest and the $k_1$ furthest. Formally, the problem is:
$$ \min_{x\in \mathbb{R}^d} \sum_{i=k_1+1}^{n-k_2} d_{(i)}(x),$$
where $d_{(i)}(x)=\|x-a_{\sigma(i)}\|_2$ for a permutation $\sigma$ such that $d_{\sigma(1)}(x)\ge \ldots \ge d_{\sigma(n)}(x)$.
This problem has been considered in several papers and textbooks (see \cite{NP05}, \cite{DH02}). Currently, there exists two approaches to solve it in the plane (i.e. $d=2$) and with the Euclidean norm that do not extend further to higher dimension nor other norms (see \cite{DN09-01,DN09-02}).
The objective function of this problem, in terms of the elements in \eqref{pro:LocPop}, is described by a vector of $\lambda$-parameters $\lambda=(\overbrace{0, \ldots, 0}^{k_1}, 1, \ldots, 1,  \overbrace{0, \ldots, 0}^{k_2})$, $f_i(u)=u$, $r=2$, $s=1$. Here, we could apply the general formulation derived from \eqref{pro:LocPop}. Nevertheless, that approach needs many decision variables which affects the sizes of the problems to be handled. Rather than the general formulation, we present here an alternative problem, based on (\ref{pro:ktrimmean}), which takes advantage of the particular structure  of this problem and reduces the number of variables needed for its representation.

We consider the problem:
\begin{align}
\min & \; (n-k_2)t+\sum_{i=1}^n r_i - \sum_{i=1}^{n} u_i z_i \nonumber \\
s.t. &\; z_i^2=\sum_{j=1}^d (x_j-a_{ij})^2,\; i=1,\ldots,n,\nonumber \\
& \sum_{i=1}^n u_i= k_1,\nonumber\\
& u_i(u_i-1)=0, \quad i=1,\ldots,n, \nonumber \\
&  t+r_i\ge z_i, \quad i=1,\ldots,n, \tag{$\mathbf{TMP}$}\label{pro:trimmed}\\
& \sum_{j=1}^d x_j^2+z_i^2+t^2+u_i^2+r_i^2 \le M,\; i=1,\ldots,n, \nonumber\\
& z_i, r_i, u_i, t \ge 0,\quad i=1,\ldots,n, \nonumber \\
&  x\in \mathbb{R}^d. \nonumber
\end{align}

\subsection{The range problem\label{s:prorange}}

The last problem that we address in our computational experiments is the range location problem. This problem consists of minimizing the difference (range) between the maximum and minimum distances with respect to  a given set of demands points $\{a_1,\ldots,a_n\}$ in $\mathbb{R}^d$ (see \cite{DN09-01,DN09-02,NP05}).
Formally, the problem can be stated as:
$$ \min_{x\in \mathbb{R}^d} \left[\max_{i=1,\ldots,n} \|x-a_i\|_2-\min_{i=1,\ldots,n} \|x-a_i\|_2 \right].
$$
This problem corresponds to the following choice of the elements in \eqref{pro:LocPop}: $\lambda=(1,0,\ldots,0,-1),\; f_i(u)=u$ and $r=2$, $s=1$. A simplified reformulation of the problem reduces to:

\begin{align}
\min &\mbox{ } z-t  \nonumber \\
s.t. & \;z_i^2=\sum_{j=1}^d (x_j-a_{ij})^2,\; i=1,\ldots,n,\nonumber \\
&  t\le z_i\le z,\; i=1,\ldots,n,\label{pro:range}\tag{$\mathbf{RP}$}\\
 & \sum_{j=1}^d x_j^2+z_i^2+t^2+z^2 \le M,\; i=1,\ldots,n,\nonumber \\
 & t,z,z_i  \ge 0,\quad i=1,\ldots,n, \nonumber \\
 & x\in \mathbb{R}^d. \nonumber
\end{align}

\section{Computational Experiments}
A series of computational experiments have been performed in order
to evaluate the behavior of the proposed methodology.
Programs have been coded in MATLAB R2010b and executed in a PC with an
Intel Core i7 processor at 2x 2.93 GHz and 8 GB of RAM. The semidefinite programs have been solved by calling SDPT3 4.0\cite{sdpt3}.

We run the algorithm for several well-known continuous location problems: Weber problem, center problem, k-center problem, trimmed-mean problem and range problem. For each of them, we obtain the CPU times for computing solutions as well as the gap with respect to the optimal solution obtained with the battery of functions in \verb"optimset" of MATLAB or the implementation by \cite{DN09-01,DN09-02}.

With regard to computing the accuracy of an obtained solution, we use the following measure for the error (see \cite{waki}):
\begin{equation} \label{eq:error}
\epsilon_{\rm obj} = \dfrac{|\text{the optimal value of the SDP } - {\rm fopt}|}{\max\{1, {\rm fopt}\}},
\end{equation}
where ${\rm fopt}$ is the optimal objective value for the problem obtained with the functions in  \verb"optimset" or the implementation by \cite{DN09-01,DN09-02}.

We have organized our computational experiments in five different problems types that coincide with those described previously in sections \ref{s:proweber}-\ref{s:prorange}. Our test problems are generated to be comparable with previous results of some algorithms in the plane but, in addition, we also consider problems in $\mathbb{R}^3$.  Thus, we report on randomly generated points on the unit square and in the unit cube. Depending on the problem, we have been able to solve different problem sizes. In all problems, we could solve instances with at least 500 points for planar and 3-dimensional problems 
 and with an average accuracy higher than $10^{-5}$. (We remark that for instance we could solve instances of more than 1000 points for Weber and center problems with high precisions.)

Our goal is to present the results organized per problem type, framework space ($\mathbb{R}^2$ or $\mathbb{R}^3$) and relaxation order. We report for Weber  problem on the first two relaxations to show that raising relaxation order one gains some extra precision (as expected) at the price of higher CPU times. In spite of that, the considered problems seems to be very well-approximated even with the first relaxation (as shown by our results).  For this reason, we only report results for relaxation order $r=2$ for the remaining problem types, namely center, $k$-centrum, range and trim-mean.

The results in our tables, for each size and problem type, are the average of ten runs. In all cases our tables are organized in the same way. Rows give the results for the different number of demand points considered in the problems. Column $\texttt{n}$ stands for the number of points considered in the problem, $\texttt{CPU time}$ is the average running time needed to solve each of the instances, $\epsilon_{obj}$ gives the error measure (see \ref{eq:error}). The final block of  3 columns informs on the sizes of the SDP problems to be solved: $\texttt{\#Cols}$, $\texttt{\#Rows}$ and $\texttt{\%NonZero}$ represent, respectively, the number of columns, rows and the percentage of nonzero entries of the constraint matrices of the problems to be considered.

We tested problems with up to $500$ demands points  (except for Weber problem where we considered $1000$ demands points) randomly generated in the unit square and the unit cube. We move $\texttt{n}$ between $10$ and $500$ (or $1000$ for Weber problem) and ten instances were generated for each value of $\texttt{n}$. The first  relaxation of the problems was solved in all cases. For the k-centrum problem type we considered three different $k$ values to test the difficulty of problems with respect to that parameter, $k=\lceil0.1 \texttt{n}\rceil, \lceil0.5 \texttt{n}\rceil, \lceil0.9  \texttt{n}\rceil$ (tables  \ref{table:4} and \ref{table:5}).

Tables \ref{table:1}-\ref{table:6} show the  averages CPU times and gaps obtained. Table \ref{table:1} summarizes the results of the Weber problems. We remark that problems with up to 1000 demand points on the plane  are solved with the first relaxation in few seconds and with accuracy higher than $10^{-4}$. Raising the relaxation order, we improve accuracy till $10^{-6}$ at the cost of multiplying CPU time by a factor of 8. Table \ref{table:2} refers to Weber problem in the $3d$ space. Results are similar although precision is higher when considering the second relaxation order. Table \ref{table:3} reports the results for the center problem on the plane and the $3d$-space. CPU times are slightly larger than for the Weber problem but accuracy are also better specially for sizes up to 100 demand points. Tables \ref{table:4} and \ref{table:5} are devoted to show the behavior of our approach for three different $k$ values of the $k$-centrum problem (Table \ref{table:4} in $\mathbb{R}^2$ and Table \ref{table:5} in $\mathbb{R}^3$). We observe that for small values of $k$, i.e. $k=\lceil 0.1n\rceil \mbox{ or } \lceil 0.5n\rceil$ the $k$-centrum is slightly harder than for values closer to $n$. The remaining factors behave similarly to those in Weber or center problems. Table \ref{table:7} reports the results for the range problem. The behavior of these problems is similar to that of the $k$centrum problems both in CPU time and accuracy. Finally, Table \ref{table:6} summarizes the results for the trimmed-mean problems. These are the harder problems among the five considered problem types. We are able to solve similar sizes with similar accuracies using the first order relaxation. However, CPU times are significantly higher than for the other problem types. These results show that this methodology can be efficiently applied to solve medium to large sized location problems.

>From our tables we conclude that Weber problem is the simplest one whereas the trimmed-mean problem  is the hardest one, as expected. We remark that CPU times increase linearly with the number of points in all problem types. A linear regression between these times and the number of points gives a regression coefficient $R$-squared (coefficient of determination of the regression)   greater than $0.98$ for all the problems. Therefore, this shows a linear dependence, up to the tested sizes, between problem sizes and CPU times for solving the corresponding relaxations. Observe that the sizes of the matrices in the SDP relaxations increase exponentially with the number of points. Nevertheless, the percentage of nonzero elements in the constraint matrices decreases very slowly (hyperbolically) when increasing the size (number of points) of the problems.

\section{Conclusions}
We develop a unified tool for minimizing weighted ordered averaging of rational functions. This approach goes beyond a trivial adaptation of the  general theory of moments-sos since ordered weighted averages of rational functions are not, in general, neither rational functions nor the supremum of rational functions so that current results cannot directly be applied to handle these problems. As an important application we cast a general class of continuous location problems within the minimization of OWA rational functions. We report computational results that show the powerfulness of this methodology to solve medium to large continuous location problems.

This new approach solves a broad class of convex and non convex continuous location problems that, up to date, were only partially solved in the specialized literature. We have tested this methodology with some medium to large size standard ordered median location problems in the plane and in the 3-dimensional space. Our goal was not to compete with previous algorithms since most of them are either problem specific or only applicable for planar problems. However, in all cases we obtained reasonable CPU times and high accuracy results even with first relaxation order. Our good results heavily rely on the fact that we have detected sparsity patterns in these problems reducing considerably the sizes of the SDP object to be considered.

The  two main lines for further research in this area would be to increase both the sizes and the classes of problems efficiently solved. These goals may be achieved by improving the efficiency of available SDP solvers and/or by finding alternative formulations that take advantage of new sparsity and symmetry patterns.


\begin{landscape}
\vspace*{1cm}
\begin{table}[h]

\begin{tabular}{|c|cc|ccc||cc|ccc|}\hline   & \multicolumn{5}{|c||}{First Relaxation ($r=2$)} &  \multicolumn{5}{c|}{Second Relaxation  ($r=3$)} \\ \hline
 \texttt{n}    &  \texttt{CPU time} & $\epsilon_{\rm obj}$  &    \texttt{\#Cols}   &  \texttt{\#Rows} & \texttt{\%NonZero}  &  \texttt{CPU time} & $\epsilon_{\rm obj}$  &    \texttt{\#Cols}   &  \texttt{\#Rows}  & \texttt{\%NonZero} \\\hline
     10    & 0.63  & 0.00191774 & 1420  & 214   & 0.780\% & 2.45  & 0.00008689 & 6200  & 587   & 0.279\% \\
    20    & 1.03  & 0.00079178 & 2840  & 414   & 0.403\% & 5.67  & 0.00002648 & 12400 & 1147  & 0.143\% \\
    30    & 1.03  & 0.00062061 & 4260  & 614   & 0.272\% & 8.94  & 0.00002065 & 18600 & 1707  & 0.096\% \\
    40    & 1.57  & 0.00082654 & 5680  & 814   & 0.205\% & 11.43 & 0.00000992 & 24800 & 2267  & 0.072\% \\
    50    & 2.12  & 0.00015842 & 7100  & 1014  & 0.165\% & 13.29 & 0.00000269 & 31000 & 2827  & 0.058\% \\\hline
    60    & 2.31  & 0.00027699 & 8520  & 1214  & 0.137\% & 16.95 & 0.00000213 & 37200 & 3387  & 0.048\% \\
    70    & 2.72  & 0.00044228 & 9940  & 1414  & 0.118\% & 20.54 & 0.00000434 & 43400 & 3947  & 0.042\% \\
    80    & 3.03  & 0.00044249 & 11360 & 1614  & 0.103\% & 26.98 & 0.00000243 & 49600 & 4507  & 0.036\% \\
    90    & 3.38  & 0.00031839 & 12780 & 1814  & 0.092\% & 29.20 & 0.00000194 & 55800 & 5067  & 0.032\% \\
    100   & 3.92  & 0.00027367 & 14200 & 2014  & 0.083\% & 31.57 & 0.00000174 & 62000 & 5627  & 0.029\% \\\hline
    150   & 6.12  & 0.00027644 & 21300 & 3014  & 0.055\% & 46.31 & 0.00000555 & 93000 & 8427  & 0.019\% \\
    200   & 8.36  & 0.00021865 & 28400 & 4014  & 0.042\% & 65.75 & 0.00000190 & 124000 & 11227 & 0.015\% \\
    250   & 10.42 & 0.00028088 & 35500 & 5014  & 0.033\% & 87.13 & 0.00000656 & 155000 & 14027 & 0.012\% \\
    300   & 12.19 & 0.00019673 & 42600 & 6014  & 0.028\% & 102.95 & 0.00001241 & 186000 & 16827 & 0.010\% \\
    350   & 14.63 & 0.00018747 & 49700 & 7014  & 0.024\% & 124.36 & 0.00000850 & 217000 & 19627 & 0.008\% \\
    400   & 17.25 & 0.00021381 & 56800 & 8014  & 0.021\% & 145.62 & 0.00000333 & 248000 & 22427 & 0.007\% \\
    450   & 20.37 & 0.00007970 & 63900 & 9014  & 0.019\% & 167.02 & 0.00000476 & 279000 & 25227 & 0.007\% \\
    500   & 22.03 & 0.00011803 & 71000 & 10014 & 0.017\% & 187.02 & 0.00000754 & 310000 & 28027 & 0.006\% \\\hline
    600   & 28.11 & 0.00012725 & 85200 & 12014 & 0.014\% & 232.19 & 0.00000287 & 372000 & 33627 & 0.005\% \\
    700   & 33.47 & 0.00015215 & 99400 & 14014 & 0.012\% & 274.88 & 0.00000332 & 434000 & 39227 & 0.004\% \\
    800   & 39.50 & 0.00009879 & 113600 & 16014 & 0.010\% & 334.10 & 0.00000420 & 496000 & 44827 & 0.004\% \\
    900   & 45.31 & 0.00011740 & 127800 & 18014 & 0.009\% & 389.00 & 0.00000350 & 558000 & 50427 & 0.003\% \\
    1000  & 55.68 & 0.00012513 & 142000 & 20014 & 0.008\% & 443.13 & 0.00000351 & 620000 & 56027 & 0.003\% \\
 \hline
\multicolumn{11}{c}{}
\end{tabular}

    \caption{Computational results for planar Weber problem and first and second relaxation.\label{table:1}}
    \end{table}
\newpage

\vspace*{2cm}
\begin{table}[h]
\begin{tabular}{|c|cc|ccc||cc|ccc|}\hline  & \multicolumn{5}{|c||}{First Relaxation ($r=2$)} &  \multicolumn{5}{c|}{Second Relaxation  ($r=3$)} \\ \hline
 \texttt{n}   &   \texttt{CPU time} & $\epsilon_{\rm obj}$  &    \texttt{\#Cols}   &  \texttt{\#Rows} &  \texttt{\%NonZero}  &    \texttt{CPU time} & $\epsilon_{\rm obj}$  &    \texttt{\#Cols}   &  \texttt{\#Rows} &  \texttt{\%NonZero} \\\hline
    10    & 1.19  & 0.00112213 & 2900  & 384   & 0.442\% & 9.13  & 0.00000379 & 17100 & 1343  & 0.124\% \\
    20    & 1.84  & 0.00036619 & 5800  & 734   & 0.231\% & 23.89 & 0.00000000 & 34200 & 2603  & 0.064\% \\
    30    & 2.56  & 0.00019790 & 8700  & 1084  & 0.157\% & 28.97 & 0.00000000 & 51300 & 3863  & 0.043\% \\
    40    & 3.54  & 0.00011433 & 11600 & 1434  & 0.118\% & 45.19 & 0.00000000 & 68400 & 5123  & 0.033\% \\
    50    & 4.27  & 0.00008446 & 14500 & 1784  & 0.095\% & 58.34 & 0.00000001 & 85500 & 6383  & 0.026\% \\\hline
    60    & 5.04  & 0.00019406 & 17400 & 2134  & 0.080\% & 66.09 & 0.00000000 & 102600 & 7643  & 0.022\% \\
    70    & 6.23  & 0.00009027 & 20300 & 2484  & 0.068\% & 77.67 & 0.00000000 & 119700 & 8903  & 0.019\% \\
    80    & 7.09  & 0.00018689 & 23200 & 2834  & 0.060\% & 90.86 & 0.00000000 & 136800 & 10163 & 0.016\% \\
    90    & 8.01  & 0.00010943 & 26100 & 3184  & 0.053\% & 124.89 & 0.00000000 & 153900 & 11423 & 0.015\% \\
    100   & 9.87  & 0.00005552 & 29000 & 3534  & 0.048\% & 164.37 & 0.00000008 & 171000 & 12683 & 0.013\% \\\hline
    150   & 14.16 & 0.00004856 & 43500 & 5284  & 0.032\% & 211.02 & 0.00000000 & 256500 & 18983 & 0.009\% \\
    200   & 20.33 & 0.00003049 & 58000 & 7034  & 0.024\% & 275.02 & 0.00000000 & 342000 & 25283 & 0.007\% \\
    250   & 25.97 & 0.00005964 & 72500 & 8784  & 0.019\% & 429.67 & 0.00000014 & 427500 & 31583 & 0.005\% \\
    300   & 34.00 & 0.00004677 & 87000 & 10534 & 0.016\% & 501.09 & 0.00000006 & 513000 & 37883 & 0.004\% \\
    350   & 39.82 & 0.00004154 & 101500 & 12284 & 0.014\% & 588.29 & 0.00000007 & 598500 & 44183 & 0.004\% \\
    400   & 47.27 & 0.00005233 & 116000 & 14034 & 0.012\% & 746.70 & 0.00000011 & 684000 & 50483 & 0.003\% \\
    450   & 57.08 & 0.00003325 & 130500 & 15784 & 0.011\% & 762.54 & 0.00000000 & 769500 & 56783 & 0.003\% \\
    500   & 65.93 & 0.00002952 & 145000 & 17534 & 0.010\% & 1063.50& 0.00000000 & 855000 & 63083 & 0.003\% \\\hline
    \multicolumn{11}{c}{}
\end{tabular}
    \caption{Computational results for Weber problem in $\R^3$ and first and second relaxation.\label{table:2}}
    \end{table}

\newpage

\vspace*{2cm}

\begin{table}[h]
\begin{tabular}{|c|cc|ccc||cc|ccc|}\hline  & \multicolumn{5}{|c||}{$\R^2$} &  \multicolumn{5}{c|}{$\R^3$} \\ \hline
 \texttt{n}  &  \texttt{CPU time} & $\epsilon_{\rm obj}$  &    \texttt{\#Cols}   &  \texttt{\#Rows} & \texttt{\%NonZero}  &  \texttt{CPU time} & $\epsilon_{\rm obj}$  &    \texttt{\#Cols}   &  \texttt{\#Rows} & \texttt{\%NonZero} \\\hline
    10    & 0.95  & 0.00000002 & 3150  & 384   & 0.423\% & 1.90  & 0.00000001 & 5700  & 629   & 0.259\% \\
    20    & 1.78  & 0.00000001 & 6300  & 734   & 0.221\% & 4.05  & 0.00000000 & 11400 & 1189  & 0.137\% \\
    30    & 2.68  & 0.00000001 & 9450  & 1084  & 0.150\% & 6.24  & 0.00000008 & 17100 & 1749  & 0.093\% \\
    40    & 3.78  & 0.00000001 & 12600 & 1434  & 0.113\% & 8.96  & 0.00000000 & 22800 & 2309  & 0.071\% \\
    50    & 4.68  & 0.00000000 & 15750 & 1784  & 0.091\% & 12.05 & 0.00000000 & 28500 & 2869  & 0.057\% \\\hline
    60    & 6.05  & 0.00000000 & 18900 & 2134  & 0.076\% & 16.63 & 0.00000000 & 34200 & 3429  & 0.048\% \\
    70    & 8.48  & 0.00000000 & 22050 & 2484  & 0.065\% & 18.84 & 0.00000002 & 39900 & 3989  & 0.041\% \\
    80    & 10.28 & 0.00000002 & 25200 & 2834  & 0.057\% & 28.08 & 0.00000000 & 45600 & 4549  & 0.036\% \\
    90    & 13.60 & 0.00000005 & 28350 & 3184  & 0.051\% & 32.16 & 0.00000000 & 51300 & 5109  & 0.032\% \\
    100   & 18.86 & 0.00000005 & 31500 & 3534  & 0.046\% & 38.78 & 0.00000291 & 57000 & 5669  & 0.029\% \\\hline
    150   & 31.12 & 0.00002157 & 47250 & 5284  & 0.031\% & 59.19 & 0.00006902 & 85500 & 8469  & 0.019\% \\
    200   & 38.76 & 0.00013507 & 63000 & 7034  & 0.023\% & 82.01 & 0.00011298 & 114000 & 11269 & 0.014\% \\
    250   & 44.34 & 0.00027776 & 78750 & 8784  & 0.019\% & 111.64 & 0.00013810 & 142500 & 14069 & 0.012\% \\
    300   & 58.10 & 0.00033715 & 94500 & 10534 & 0.015\% & 124.47 & 0.00030316 & 171000 & 16869 & 0.010\% \\
    350   & 81.59 & 0.00047225 & 110250 & 12284 & 0.013\% & 170.43 & 0.00043926 & 199500 & 19669 & 0.008\% \\
    400   & 90.22 & 0.00048347 & 126000 & 14034 & 0.012\% & 172.05 & 0.00052552 & 228000 & 22469 & 0.007\% \\
    450   & 93.50 & 0.00047479 & 141750 & 15784 & 0.010\% & 242.66 & 0.00057288 & 256500 & 25269 & 0.006\% \\
    500   & 151.64 & 0.00066416 & 157500 & 17534 & 0.009\% & 226.73 & 0.00059268 & 285000 & 28069 & 0.006\% \\
 \hline \multicolumn{11}{c}{}
\end{tabular}
\caption{Computational results for center problem in $\R^2$ and $\R^3$ and first relaxations.\label{table:3}}
\end{table}

\newpage

\vspace*{2cm}

\begin{table}[h]
\begin{tabular}{|c|cc|cc|cc|ccc|}\hline  & \multicolumn{2}{|c||}{$k=\lceil 0.1\,n\rceil$} & \multicolumn{2}{|c||}{$k=\lceil 0.5\,n\rceil$} & \multicolumn{2}{|c||}{$k=\lceil 0.9\,n\rceil$} & \multicolumn{3}{|c|}{Sizes} \\ \hline
 \texttt{n}   &     \texttt{CPU time} & $\epsilon_{\rm obj}$  &  \texttt{CPU time} & $\epsilon_{\rm obj}$  &     \texttt{CPU time} & $\epsilon_{\rm obj}$  &    \texttt{\#Cols}   &  \texttt{\#Rows} &  \texttt{\#NonZero}\\\hline
    10    & 2.64   & 0.00000630 & 2.76   & 0.00000081 & 2.59   & 0.00017665 & 6570   & 944   & 0.175\% \\
    20    & 6.43   & 0.00001375 & 6.15   & 0.00000298 & 5.30   & 0.00000545 & 13140  & 1854  & 0.089\% \\
    30    & 10.88  & 0.00000379 & 9.89   & 0.00000410 & 9.16   & 0.00000102 & 19710  & 2764  & 0.060\% \\
    40    & 15.89  & 0.00000717 & 16.33  & 0.00000090 & 12.22  & 0.00000122 & 26280  & 3674  & 0.045\% \\
    50    & 21.24  & 0.00000282 & 18.51  & 0.00000083 & 16.77  & 0.00000105 & 32850  & 4584  & 0.036\% \\\hline
    60    & 25.77  & 0.00000077 & 25.41  & 0.00000283 & 20.21  & 0.00000806 & 39420  & 5494  & 0.030\% \\
    70    & 28.01  & 0.00000204 & 31.02  & 0.00000234 & 25.07  & 0.00000192 & 45990  & 6404  & 0.026\% \\
    80    & 37.25  & 0.00000085 & 31.48  & 0.00000044 & 30.66  & 0.00000220 & 52560  & 7314  & 0.023\% \\
    90    & 47.16  & 0.00000062 & 41.07  & 0.00000765 & 33.92  & 0.00000086 & 59130  & 8224  & 0.020\% \\
    100   & 53.68  & 0.00000084 & 41.42  & 0.00000065 & 39.49  & 0.00000188 & 65700  & 9134  & 0.018\% \\\hline
    150   & 86.48  & 0.00000089 & 68.48  & 0.00000056 & 65.95  & 0.00000059 & 98550  & 13684 & 0.012\% \\
    200   & 123.02 & 0.00000056 & 96.40  & 0.00000075 & 88.10  & 0.00000275 & 131400 & 18234 & 0.009\% \\
    250   & 149.26 & 0.00003681 & 135.67 & 0.00000071 & 113.68 & 0.00000161 & 164250 & 22784 & 0.007\% \\
    300   & 180.38 & 0.00000408 & 161.84 & 0.00000081 & 146.22 & 0.00000349 & 197100 & 27334 & 0.006\% \\
    350   & 223.27 & 0.00003013 & 193.31 & 0.00003623 & 176.46 & 0.00000151 & 229950 & 31884 & 0.005\% \\
    400   & 260.27 & 0.00000079 & 225.07 & 0.00003689 & 201.01 & 0.00000376 & 262800 & 36434 & 0.005\% \\
    450   & 290.23 & 0.00004512 & 272.55 & 0.00000097 & 237.23 & 0.00000168 & 295650 & 40984 & 0.004\% \\
    500   & 345.93 & 0.00000224 & 310.19 & 0.00000119 & 269.99 & 0.00000200 & 328500 & 45534 & 0.004\% \\
\hline
    \multicolumn{10}{c}{}
\end{tabular}
    \caption{Computational results for planar $k$-centrum problems and first relaxation ($r=2$).\label{table:4}}
\end{table}

\newpage

\vspace*{2cm}

\begin{table}[h]
\begin{tabular}{|c|cc|cc|cc|ccc|}\hline  & \multicolumn{2}{|c||}{$k=\lceil 0.1\,n\rceil$} & \multicolumn{2}{|c||}{$k=\lceil 0.5\,n\rceil$} & \multicolumn{2}{|c||}{$k=\lceil 0.9\,n\rceil$} & \multicolumn{3}{|c|}{Sizes} \\ \hline
 \texttt{n}   &     \texttt{CPU time} & $\epsilon_{\rm obj}$  &  \texttt{CPU time} & $\epsilon_{\rm obj}$  &     \texttt{CPU time} & $\epsilon_{\rm obj}$  &    \texttt{\#Cols}   &  \texttt{\#Rows} &  \texttt{\%NonZero}\\\hline
    10    & 7.06  & 0.00041340 & 5.85  & 0.00000039 & 6.05  & 0.00000168 & 10780 & 1469  & 0.114\% \\
    20    & 16.40 & 0.00000950 & 15.42 & 0.00000095 & 16.30 & 0.00000019 & 21560 & 2869  & 0.059\% \\
    30    & 27.63 & 0.00001682 & 23.72 & 0.00000028 & 27.12 & 0.00000132 & 32340 & 4269  & 0.039\% \\
    40    & 45.25 & 0.00000075 & 42.31 & 0.00000086 & 37.38 & 0.00000077 & 43120 & 5669  & 0.030\% \\
    50    & 54.39 & 0.00000282 & 53.66 & 0.00000026 & 51.94 & 0.00000087 & 53900 & 7069  & 0.024\% \\\hline
    60    & 63.16 & 0.00000259 & 59.34 & 0.00000091 & 63.91 & 0.00000065 & 64680 & 8469  & 0.020\% \\
    70    & 85.17 & 0.00000144 & 81.32 & 0.00000258 & 74.24 & 0.00000079 & 75460 & 9869  & 0.017\% \\
    80    & 106.65 & 0.00000326 & 83.96 & 0.00000044 & 88.76 & 0.00000158 & 86240 & 11269 & 0.015\% \\
    90    & 114.38 & 0.00000209 & 93.85 & 0.00000100 & 103.56 & 0.00000092 & 97020 & 12669 & 0.013\% \\
    100   & 122.01 & 0.00000088 & 109.17 & 0.00000224 & 118.03 & 0.00000067 & 107800 & 14069 & 0.012\% \\\hline
    150   & 235.10 & 0.00000073 & 211.54 & 0.00000890 & 187.51 & 0.00000135 & 161700 & 21069 & 0.008\% \\
    200   & 305.51 & 0.00002407 & 255.54 & 0.00007106 & 284.80 & 0.00000157 & 215600 & 28069 & 0.006\% \\
    250   & 403.89 & 0.00000519 & 348.32 & 0.00004300 & 357.79 & 0.00000143 & 269500 & 35069 & 0.005\% \\
    300   & 492.04 & 0.00046130 & 433.69 & 0.00007630 & 471.78 & 0.00000174 & 323400 & 42069 & 0.004\% \\
    350   & 529.61 & 0.00041229 & 484.87 & 0.00000058 & 448.60 & 0.00001791 & 377300 & 49069 & 0.003\%\\
    400   & 619.97 & 0.00000091 & 585.93 & 0.00000055 & 523.81 & 0.00000829 & 431200 & 56069 & 0.003\%\\
    450   & 705.99 & 0.00048727 & 693.77 & 0.00000037 & 580.06 & 0.00004327 & 485100 & 63069 & 0.003\%\\
    500   & 817.75 & 0.00012138 & 789.77 & 0.00000087 & 664.94 & 0.00000318 & 539000 & 70069 & 0.002\%\\ \hline
    \multicolumn{10}{c}{}
\end{tabular}
    \caption{Computational results for $k$-centrum problems in $\R^3$ and first relaxation ($r=2$).\label{table:5}}
\end{table}

\newpage

\vspace*{2cm}

\begin{table}[h]
\begin{tabular}{|c|cc|ccc||cc|ccc|}\hline  & \multicolumn{5}{|c||}{$\R^2$} &  \multicolumn{5}{c|}{$\R^3$} \\ \hline
 \texttt{n}   & \texttt{CPU time} & $\epsilon_{\rm obj}$  &    \texttt{\#Cols}   &  \texttt{\#Rows} & \texttt{\%NonZero} &  \texttt{CPU time} & $\epsilon_{\rm obj}$  &    \texttt{\#Cols}   &  \texttt{\#Rows} & \texttt{\%NonZero} \\\hline
    10    & 2.96  & 0.00007519 & 6060  & 629   & 0.252\% & 5.68  & 0.00001997 & 10080 & 965   & 0.164\% \\
    20    & 7.04  & 0.00001750 & 12120 & 1189  & 0.133\% & 18.45 & 0.00015758 & 20160 & 1805  & 0.088\% \\
    30    & 13.94 & 0.00098322 & 18180 & 1749  & 0.091\% & 35.37 & 0.00028187 & 30240 & 2645  & 0.060\% \\
    40    & 14.53 & 0.00002124 & 24240 & 2309  & 0.069\% & 35.77 & 0.00032049 & 40320 & 3485  & 0.045\% \\
    50    & 24.49 & 0.00004314 & 30300 & 2869  & 0.055\% & 65.80 & 0.00051293 & 50400 & 4325  & 0.037\% \\\hline
    60    & 23.49 & 0.00047832 & 36360 & 3429  & 0.046\% & 59.19 & 0.00005082 & 60480 & 5165  & 0.031\% \\
    70    & 34.87 & 0.00003903 & 42420 & 3989  & 0.040\% & 68.46 & 0.00006841 & 70560 & 6005  & 0.026\% \\
    80    & 38.69 & 0.00026693 & 48480 & 4549  & 0.035\% & 79.54 & 0.00003016 & 80640 & 6845  & 0.023\% \\
    90    & 42.34 & 0.00042121 & 54540 & 5109  & 0.031\% & 90.76 & 0.00017468 & 90720 & 7685  & 0.021\% \\
    100   & 58.36 & 0.00052427 & 60600 & 5669  & 0.028\% & 97.26 & 0.00015535 & 100800 & 8525  & 0.019\% \\\hline
    150   & 65.04 & 0.00021457 & 90900 & 8469  & 0.019\% & 159.41 & 0.00094711 & 151200 & 12725 & 0.012\% \\
    200   & 98.23 & 0.00041499 & 121200 & 11269 & 0.014\% & 197.66 & 0.00040517 & 201600 & 16925 & 0.009\% \\
    250   & 131.42 & 0.00033959 & 151500 & 14069 & 0.011\% & 274.14 & 0.00057559 & 252000 & 21125 & 0.007\% \\
    300   & 159.87 & 0.00014556 & 181800 & 16869 & 0.009\% & 322.21 & 0.00036845 & 302400 & 25325 & 0.006\% \\
    350   & 169.29 & 0.00003661 & 212100 & 19669 & 0.008\% & 393.80 & 0.00096204 & 352800 & 29525 & 0.005\% \\
    400   & 167.74 & 0.00123896 & 242400 & 22469 & 0.007\% & 361.12 & 0.00022448 & 403200 & 33725 & 0.005\%\\
    450   & 218.70 & 0.00207328 & 272700 & 25269 & 0.006\% & 513.55 & 0.00044016 & 453600 & 37925 & 0.004\%\\
    500   & 228.68 & 0.00438388 & 303000 & 28069 & 0.006\% & 554.94 & 0.00028013 & 504000 & 42125 & 0.004\%\\
\hline
    \multicolumn{11}{c}{}
\end{tabular}
    \caption{Computational results for range problem in $\R^2$ and $\R^3$ and first relaxation.\label{table:7}}
\end{table}

\newpage

\vspace*{2cm}

\begin{table}[h]
\begin{tabular}{|c|cc|ccc||cc|ccc|}\hline  & \multicolumn{5}{|c||}{$\R^2$} &  \multicolumn{5}{c|}{$\R^3$} \\ \hline
 \texttt{n}   & \texttt{CPU time} & $\epsilon_{\rm obj}$  &    \texttt{\#Cols}   &  \texttt{\#Rows} & \texttt{\%NonZero} &  \texttt{CPU time} & $\epsilon_{\rm obj}$  &    \texttt{\#Cols}   &  \texttt{\#Rows} & \texttt{\%NonZero} \\\hline
    10    & 5.31  & 0.00017041 & 11760 & 1784  & 0.087\% & 14.09 & 0.00000197 & 18080 & 2669  & 0.059\% \\
    20    & 12.39 & 0.00000619 & 23520 & 3534  & 0.044\% & 33.85 & 0.00047792 & 36160 & 5269  & 0.030\% \\
    30    & 18.11 & 0.00020027 & 35280 & 5284  & 0.029\% & 49.16 & 0.00000670 & 54240 & 7869  & 0.020\% \\
    40    & 30.39 & 0.00035248 & 47040 & 7034  & 0.022\% & 73.13 & 0.00001450 & 72320 & 10469 & 0.015\% \\
    50    & 36.04 & 0.00181487 & 58800 & 8784  & 0.018\% & 98.17 & 0.00001624 & 90400 & 13069 & 0.012\% \\\hline
    60    & 49.16 & 0.00085810 & 70560 & 10534 & 0.015\% & 131.38 & 0.00003143 & 108480 & 15669 & 0.010\% \\
    70    & 60.57 & 0.00012995 & 82320 & 12284 & 0.013\% & 161.25 & 0.00004420 & 126560 & 18269 & 0.009\% \\
    80    & 73.54 & 0.00092073 & 94080 & 14034 & 0.011\% & 188.51 & 0.00012265 & 144640 & 20869 & 0.008\% \\
    90    & 76.12 & 0.00040564 & 105840 & 15784 & 0.010\% & 203.06 & 0.00011847 & 162720 & 23469 & 0.007\% \\
    100   & 91.26 & 0.00218668 & 117600 & 17534 & 0.009\% & 220.68 & 0.00011032 & 180800 & 26069 & 0.006\% \\\hline
    150   & 153.31 & 0.00814047 & 176400 & 26284 & 0.006\% & 400.37 & 0.00026203 & 271200 & 39069 & 0.004\% \\
    200   & 257.23 & 0.00032380 & 235200 & 35034 & 0.004\% & 552.19 & 0.00056138 & 361600 & 52069 & 0.003\% \\
    250   & 339.72 & 0.00051519 & 294000 & 43784 & 0.004\% & 659.01 & 0.00046219 & 452000 & 65069 & 0.002\% \\
    300   & 326.52 & 0.00225994 & 352800 & 52534 & 0.003\% & 884.40 & 0.00038481 & 542400 & 78069 & 0.002\% \\
    350   & 410.32 & 0.00047898 & 411600 & 61284 & 0.003\% & 955.53 & 0.00061467 & 632800 & 91069 & 0.002\%\\
    400   & 582.36 & 0.00047130 & 470400 & 70034 & 0.002\% & 1165.79 & 0.00058261 & 723200 & 104069 & 0.002\%\\
    450   & 631.58 & 0.00060180 & 529200 & 78784 & 0.002\% & 1931.76 & 0.00081711 & 813600 & 117069 & 0.001\%\\
    500   & 685.79 & 0.00079679 & 588000 & 87534 & 0.002\% & 9151.90 & 0.00063861 & 904000 & 130069 & 0.001\%\\
\hline
    \multicolumn{11}{c}{}
\end{tabular}
    \caption{Computational results for trimmed mean problem with $k1=k2=\lceil 0.20\,n\rceil$ in $\R^2$ and $\R^3$ and first relaxation.\label{table:6}}
\end{table}

\newpage

\end{landscape}

\end{document}